\def\namedlabel#1#2{\begingroup
    #2%
    \def\@currentlabel{#2}%
    \phantomsection\label{#1}\endgroup
}
\DeclarePairedDelimiter\ev{\langle}{\rangle}
\renewcommand\[{\begin{equation}}\renewcommand\]{\end{equation}} 
\renewcommand\epsilon\varepsilon 
\renewcommand\phi\varphi 
\newenvironment{customthm}[1]
  {\innercustomthm}
  {\endinnercustomthm}
      \newcommand\NN{\mathcal{N}}
      \newcommand\TT{\mathcal{T}}
      \newcommand\ZZ{\mathbb{Z}} 
      \newcommand\QQ{\mathbb{Q}} 
      \newcommand\ab\allowbreak 
      \newcommand\GW{\operatorname{GW}}
      \newcommand\VV{\mathcal{V}}
      \newcommand\Spec{\operatorname{Spec}}
      \newcommand\M{M_{-1}}
      \newcommand\MQ{M_{-1,\QQ}}
      \newcommand\Tr{\operatorname{Tr}}
      \newcommand\tr{\operatorname{tr}}
      \newcommand\res{\operatorname{res}}
      \newcommand\Sm{\operatorname{\textbf{Sm}}_k}
      \newcommand\eeta{\boldsymbol{\eta}}
      \newcommand\Om{\operatorname{\Omega}}
      \newcommand\AAA{\mathbb{A}}
      \providecommand{\keywords}[1]
      {
        \small	
        \textbf{\textit{Keywords---}} #1
      }
      \providecommand{\Codes}[1]
      {
        \small	
        \textbf{\textit{MSC---}} #1
      }
      \newcommand\kMW{\mathbf{K}^{\text{MW}}}
      \newcommand\LL{\mathcal{L}}
      \newcommand\LLL{\omega}
      \newcommand\Gm{\mathbb{G}_m}
       \newcommand\PP{\mathbb{P}}
\theoremstyle{definition} 
\newtheorem{Def}{Definition}[subsection] 
\theoremstyle{plain} 
\newtheorem{Pro}[Def]{Proposition} 
\newtheorem{Lem}[Def]{Lemma} 
\newtheorem{The}[Def]{Theorem} 
\newtheorem{Cor}[Def]{Corollary} 
\theoremstyle{remark} 
\newtheorem{Rem}[Def]{Remark} 
\newtheorem{Par}[Def]{} 
\title{Transfers on Milnor-Witt K-theory} 
\author{\sc Niels FELD\footnote{Adress: Institut Fourier, 100 Rue des Mathématiques, Grenoble, France.}
\footnote{E-mail adress: <niels.feld@univ-grenoble-alpes.fr>.}  } 
\date{2020} 
\begin{document} 

\maketitle 


\begin{abstract} We give a new proof of the fact that Milnor-Witt K-theory has geometric transfers. The proof yields to a simplification of Morel's conjecture about transfers on contracted homotopy sheaves.
\end{abstract}

\keywords{Cycle modules, Milnor-Witt K-theory, Chow-Witt groups, A1-homotopy}

\Codes{14C17, 14C35, 11E81}
\tableofcontents

\pagebreak

\section{Introduction}

\subsection{Current work}

In \cite[Chapter 3]{Mor12}, Morel introduced the Milnor-Witt K-theory of a field. Following ideas of Bass and Tate \cite{BassTate73}, one can define geometric transfer maps
\begin{center}
$\Tr_{x_1,\dots,x_r/E}=\Tr_{x_r/E(x_1,\dots,x_{r-1})}\circ \dots \circ \Tr_{x_1/E}:\kMW_*(E(x_1,\dots, x_r),\omega_{E(x_1,\dots, x_r)/E})
\to \kMW_*(E)$
\end{center}
 on $\kMW$ for finite extensions $E(x_1,\dots, x_r)/E$. Morel proved in \cite[Chapter 4]{Mor12} that any homotopy sheaves of the form $M_{-2}$ admit such transfers and that they are functorial. In particular, this applies to Milnor-Witt K-theory.
\par In this article, we give an alternative proof of this result:

\begin{customthm}{1} [Theorem \ref{KatoMorel}]
The transfers maps
\begin{center}
$\Tr_{x_1,\dots,x_r/E}:\kMW_*(E(x_1,\dots, x_r),\omega_{E(x_1,\dots, x_r)/E})\to \kMW_*(E)$
\end{center}  do not depend on the choice of the generating system $(x_1,\dots , x_r)$.

\end{customthm}
The idea is to reduce to the case of $p$-primary fields (see Definition \ref{DefprimaryFields}) then study the transfers manually, as Kato originally did for Milnor K-theory (see \cite{GS17} for a modern exposition). More elementary, the proof does not apply in full generality to the case of contracted homotopy sheaves $M_{-1}$. However, we obtain as a corollary a reduction of Morel's conjecture \cite[Conjecture 4.1.13]{Fel20}.

\begin{customthm}{2} [Theorem \ref{ReductionConjectureMoinsUn}]
In order to prove that a contracted homotopy sheaf $M_{-1}$ has functorial transfers, it suffices to consider the case of $p$-primary fields (where $p$ is a prime number).
\end{customthm}

\subsection{Outline of the chapter}

In Subsection \ref{subsectionOnprimaryfields}, we recall the basic properties of some fields that we call $p$-primary fields. For $p$ a prime number, a $p$-primary field has no nontrivial finite extension prime to $p$ (see Definition \ref{DefprimaryFields}).
\par In Subsection \ref{subsectionTransfersKMW}, we prove that Milnor-Witt K-theory admit transfer maps that are functorial. The proof is similar to the original proof of Kato for Milnor K-theory (see \cite{GS17}): we reduce to the case of $p$-primary fields then study the transfers manually.

\section*{Notation}\label{Conventions}
Throughout the paper, we fix a (commutative) field $k$ and we assume moreover that $k$ is perfect (of arbitrary characteristic).
\par By a field $E$ over $k$, we mean {\em a finitely generated extension of fields $E/k$}.
\par Let $f:X\to Y$ be a morphism of schemes. Denote by $\LL_f$ (or $\LL_{X/Y}$) the virtual vector bundle over $Y$ associated with the cotangent complex of $f$, and by $\LLL_f$ (or $\LLL_{X/Y}$) its determinant. Recall that if $p:X\to Y$ is a smooth morphism, then $\LL_p$ is (isomorphic to) $\TT_p=\Om_{X/Y}$ the space of (Kähler) differentials. If $i:Z\to X$ is a regular closed immersion, then $\LL_i$ is the normal cone $-\NN_ZX$. If $f$ is the composite $\xymatrix{ Y \ar[r]^i & \PP^n_X \ar[r]^p & X}$  with $p$ and $i$ as previously (in other words, if $f$ is lci projective), then $\LL_f$ is isomorphic to the virtual tangent bundle $i^*\TT_{\PP^n_X/X} - \NN_Y(\PP^n_X) $.
\par Let $X$ be a scheme and $x\in X$ a point, we denote by $\LL_{x}=(\mathfrak{m}_x/\mathfrak{m}_x^2)^{\vee}$ and $\LLL_x$ its determinant. Similarly, let $v$ a discrete valuation on a field, we denote by $\LLL_{v}$ the line bundle $(\mathfrak{m}_v/\mathfrak{m}_v^2)^{\vee}$.
\par Let $E$ be a field (over $k$) and $v$ a valuation on $E$. We will always assume that $v$ is discrete. We denote by $\mathcal{O}_v$ its valuation ring, by $\mathfrak{m}_v$ its maximal ideal and by $\kappa(v)$ its residue class field. We consider only valuations of geometric type, that is we assume: $k\subset \mathcal{O}_v$, the residue field $\kappa(v)$ is finitely generated over $k$ and satisfies $\operatorname{tr.deg}_k(\kappa(v))+1=\operatorname{tr.deg}_k(E)$.

\par Let $E$ be a field. We denote by $\GW(E)$ the Grothendieck-Witt ring of symmetric bilinear forms on $E$. For any $a\in E^*$, we denote by $\ev{a}$ the class of the symmetric bilinear form on $E$ defined by $(X,Y)\mapsto aXY$ and, for any natural number $n$, we put $n_{\epsilon}=\sum_{i=1}^n \ev{-1}^{i-1}$.
\par To any natural number $n$, we can associate an element in $\GW(E)$ denoted by $n_{\epsilon}=\sum_{i=1}^n \ev{-1}^{i-1}$. Recall that, if $n$ and $m$ are two natural numbers, then $(nm)_{\epsilon}=n_{\epsilon}m_{\epsilon}$.

\section*{Acknowledgement}
I deeply thank my two PhD advisors Frédéric D\'eglise and Jean Fasel. This work received support from the French "Investissements d'Avenir" program, project ISITE-BFC (contract ANR-lS-IDEX-OOOB).

\section{Transfers on Milnor-Witt K-theory}

\subsection{On $p$-primary fields}
\label{subsectionOnprimaryfields}

We recall some facts about fields (See \cite[§1]{Shap82} and \cite[Section 5]{BassTate73}). Let $E$ be a field and $p$ a prime number. Fix a separable closure $E_s$ of $E$ and consider the set of all sub-extensions of $E_s$ that contain $E$ and that can be realized as a union of finite prime-to-$p$ extensions  of $E$. Zorn's lemma implies that this set contains a maximal element $E_{\ev{p}}$ for the inclusion.

\begin{Pro}
If $F$ is a finite extension of $E$ contained in $E_{\ev{p}}$, then its degree $[F:E]$ is prime to $p$.
\end{Pro}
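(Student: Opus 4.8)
The plan is to reduce the statement to the single fact that a compositum of finitely many finite separable extensions of prime-to-$p$ degree again has prime-to-$p$ degree, and then to exploit multiplicativity of degrees in a tower. First I would use the defining property of $E_{\ev{p}}$: since it belongs to the set described above, it can be written as a union $E_{\ev{p}} = \bigcup_{j} M_j$ of finite extensions $M_j/E$, each of degree prime to $p$, all lying inside the separable closure $E_s$ and hence separable over $E$.

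Next, because $F/E$ is finite, I would write $F = E(\alpha_1, \dots, \alpha_n)$ with each $\alpha_i \in E_{\ev{p}}$, so that each $\alpha_i$ lies in some member $M_{j(i)}$ of the union. Setting $L = M_{j(1)} \cdots M_{j(n)}$ for the compositum taken inside $E_s$, I obtain a tower $E \subseteq F \subseteq L$, so that $[L:E] = [L:F]\,[F:E]$; it then suffices to show that $[L:E]$ is prime to $p$, since the divisibility $[F:E] \mid [L:E]$ forces the conclusion immediately.

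The heart of the argument, and the step I expect to be the main obstacle, is therefore showing that $L/E$ has prime-to-$p$ degree. I would prove this by induction on the number of factors, the key input being that for separable extensions one has the divisibility $[L_1 L_2 : L_1] \mid [L_2 : E]$: writing $L_2 = E(\beta)$ with separable minimal polynomial $g$, the minimal polynomial of $\beta$ over $L_1$ divides $g$, so its degree divides $[L_2:E]$. Combined with $[L_1 L_2 : E] = [L_1 L_2 : L_1]\,[L_1:E]$ and the inductive hypothesis that $[L_1:E]$ is prime to $p$, this shows that each successive compositum keeps prime-to-$p$ degree, whence $[L:E]$, and a fortiori $[F:E]$, are prime to $p$. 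The only genuine care needed is to keep all composita inside the fixed separable closure $E_s$, so that separability is preserved at every stage and the divisibility above remains available.
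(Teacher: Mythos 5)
Your argument breaks exactly at the step you yourself identify as the heart of the proof: the claimed divisibility $[L_1L_2:L_1]\mid[L_2:E]$ for separable extensions is false. What your minimal-polynomial argument actually gives is the inequality $[L_1L_2:L_1]\le[L_2:E]$: the minimal polynomial of $\beta$ over $L_1$ does divide $g$, but the degree of a factor of $g$ need not \emph{divide} $\deg g$. Concretely, take $E=\QQ$, $L_1=\QQ(\sqrt[3]{2})$ and $L_2=\QQ(\omega\sqrt[3]{2})$ with $\omega$ a primitive cube root of unity: both have degree $3$ over $\QQ$, yet $L_1L_2=\QQ(\sqrt[3]{2},\omega)$ has degree $6$, so $[L_1L_2:L_1]=2$ does not divide $[L_2:E]=3$. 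The same example, read with $p=2$, shows that the statement you were reducing to --- a compositum of finitely many finite separable extensions of prime-to-$p$ degree again has prime-to-$p$ degree --- is false as a piece of general field theory: two odd-degree extensions can have a compositum of even degree. Consequently your reduction to the compositum $L$ of the $M_{j(i)}$ cannot be completed by any such general lemma; the only way to see that $[L:E]$ is prime to $p$ is to use again that $L\subseteq E_{\ev{p}}$, which is exactly the proposition being proved, i.e.\ the argument becomes circular. (The divisibility you want does hold when $L_2/E$ is Galois, since then $[L_1L_2:L_1]=[L_2:L_1\cap L_2]$, but the fields $M_j$ are not assumed normal over $E$, and replacing them by their Galois closures destroys the prime-to-$p$ hypothesis, as $\QQ(\sqrt[3]{2})$ already shows.)

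The missing idea --- and the one that makes the paper's very terse proof work --- is the primitive element theorem. Since $F\subseteq E_{\ev{p}}\subseteq E_s$, the extension $F/E$ is finite and \emph{separable}, so $F=E(y)$ for a single element $y$. Now $y\in E_{\ev{p}}=\bigcup_j M_j$ lies in some finite prime-to-$p$ extension $M_j$ of $E$, hence $F=E(y)\subseteq M_j$, and the tower formula $[M_j:E]=[M_j:F]\,[F:E]$ gives $[F:E]\mid[M_j:E]$, which is prime to $p$. No statement about composita is needed: the whole point is to collapse $F$ to one generator \emph{before} invoking the union structure of $E_{\ev{p}}$, because knowing that each of several generators $x_1,\dots,x_r$ has prime-to-$p$ degree over $E$ is not sufficient, for precisely the reason your counterexample-prone lemma fails.
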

\begin{proof}
Write $F=E(x_1,\dots, x_r)$ with $x_i\in F$. Each $x_i$ is contained in a prime-to-$p$ extension of $E$ hence has a degree prime to $p$.
\end{proof}

\begin{Pro}
If $F$ is a finite extension of $E_{\ev{p}}$, then its degree $[F:E_{\ev{p}}]$ is equal to $p^n$ for some natural number $n$.
\end{Pro}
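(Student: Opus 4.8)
The plan is to reduce to separable extensions and then run a Galois-theoretic argument built on Sylow's theorem, the whole force of which lies in exploiting the maximality of $E_{\ev{p}}$. First I would reduce to the separable case: writing $F_s$ for the separable closure of $E_{\ev{p}}$ in $F$, the degree $[F:F_s]$ is purely inseparable, hence a power of the characteristic, so the essential content is to show that the separable degree $[F_s:E_{\ev{p}}]$ is a power of $p$. Passing to the Galois closure of $F_s$ over $E_{\ev{p}}$ (which only enlarges the group without creating new prime-to-$p$ behaviour), I may assume $F/E_{\ev{p}}$ is finite Galois with group $G=\Gal(F/E_{\ev{p}})$.

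The heart of the argument is the following. Choose a Sylow $p$-subgroup $P\subseteq G$ and let $M=F^{P}$ be its fixed field, so that $[M:E_{\ev{p}}]=[G:P]$ is prime to $p$ while $[F:M]=|P|$ is a power of $p$. It then suffices to prove $M=E_{\ev{p}}$, for then $G=P$ and $[F:E_{\ev{p}}]=|G|$ is a power of $p$. Since $M/E_{\ev{p}}$ is finite separable, write $M=E_{\ev{p}}(\alpha)$ with minimal polynomial $f$ over $E_{\ev{p}}$. Because $E_{\ev{p}}=\bigcup_i E_i$ is the union of the finite prime-to-$p$ extensions $E_i/E$ it contains, the finitely many coefficients of $f$ lie in a single such $E_0$; by the preceding proposition every finite sub-extension of $E_{\ev{p}}/E$, and in particular $E_0$, has degree prime to $p$ over $E$.

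The main obstacle is the descent step: converting the single extension $M/E_{\ev{p}}$ into a genuine union of finite prime-to-$p$ extensions of $E$, so as to contradict maximality. Here I would use that $f$, being irreducible over the larger field $E_{\ev{p}}$, is a fortiori irreducible over $E_0$, so that $[E_0(\alpha):E_0]=\deg f=[M:E_{\ev{p}}]$ is prime to $p$; combined with $[E_0:E]$ prime to $p$ this makes $E_0(\alpha)/E$ a finite separable prime-to-$p$ extension. For every intermediate $E_i\supseteq E_0$ the minimal polynomial of $\alpha$ over $E_i$ divides $f$, so $[E_i(\alpha):E_i]$ divides $[M:E_{\ev{p}}]$ and $[E_i(\alpha):E]$ stays prime to $p$; hence $M=E_{\ev{p}}(\alpha)=\bigcup_i E_i(\alpha)$ exhibits $M$ as a union of finite prime-to-$p$ extensions of $E$ inside $E_s$.

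By maximality of $E_{\ev{p}}$ this forces $M\subseteq E_{\ev{p}}$, whence $M=E_{\ev{p}}$, $G=P$ is a $p$-group, and the proposition follows. I expect the delicate points to be precisely this descent and the accompanying degree bookkeeping — keeping every intermediate extension prime to $p$, where the preceding proposition is indispensable — together with checking that the reductions to the separable, Galois situation introduce no spurious prime-to-$p$ factors; the Sylow input itself is routine.
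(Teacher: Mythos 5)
Your proof is correct and is essentially the paper's own argument: both rest on a Sylow $p$-subgroup of the Galois group of a normal closure together with the descent step (the coefficients of the relevant minimal polynomial lie in a finite prime-to-$p$ extension $E_0$ of $E$, and irreducibility over $E_{\ev{p}}$ descends to $E_0$), so that a nontrivial prime-to-$p$ subextension of anything over $E_{\ev{p}}$ would contradict maximality; you simply run this once on the fixed field $M=F^{P}$ via a primitive element, where the paper argues element by element, and you spell out the descent that the paper compresses into a single sentence. One caveat: your reduction to the separable case tacitly assumes $\operatorname{char} E\in\{0,p\}$ (a power of the characteristic need not be a power of $p$), but the paper shares this blind spot — its proof only treats $F\subseteq E_s$, and the statement as written actually fails for purely inseparable extensions when $0<\operatorname{char} E\neq p$.
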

\begin{proof}
Let $x$ be any element in $F$ and denote by $P_x$ its irreducible polynomial over $E_{\ev{p}}$. We prove that its degree is a power of $p$. All the coefficients lie in a finite prime-to-$p$ extension of $E$. If the degree of $x$ over $E_{\ev{p}}$ is prime to $p$, then $E_{\ev{p}}(x)$, which is a nontrivial extension of $E_{\ev{p}}$, contradicts the maximality of $E_{\ev{p}}$. Write $p^nm$ the degree of $x$ over $E_{\ev{p}}$ with $n,m \geq 1$ and $(n,p)=1$. Let $F_N$ be the normal closure of $F$ in $E_s$ ; it is a Galois extension of $E_{\ev{p}}$ whose degree over $E_{\ev{p}}$ is divisible by $p^nm$. If $n\neq 1$, then a Sylow $p$-subgroup $S(p)$ of $\operatorname{Gal}(F_N/E_{\ev{p}})$ is a nontrivial proper subgroup and the fixed field $F_N^{S(p)}$ is a nontrivial prime-to-$p$ extension of $E_{\ev{p}}$, which is absurd. Thus $n=1$ and the result follows.
\end{proof}

The previous result leads to the following definition.
\begin{Def} \label{DefprimaryFields}
A field  that has no nontrivial finite extension prime to $p$ is called $p$-primary.
\end{Def}

\begin{Pro} 
Let $F$ be a nontrivial finite extension of $E_{\ev{p}}$ contained in $E_s$ and let $p^n$ be the degree $[F:E_{\ev{p}}]$. Then there is a tower of fields
\begin{center}
$E_{\ev{p}} = F_1 \subset F_2 \subset \dots \subset F_n=F$ 
\end{center}
such that $[F_i:F_{i-1}]=p$.
\end{Pro}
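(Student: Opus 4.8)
The plan is to translate the statement, through Galois theory, into the standard fact that a subgroup of a finite $p$-group is the bottom term of a chain of subgroups each of index $p$. Write $K=E_{\ev{p}}$ for brevity. Since $F/K$ is finite and separable (it lies in $E_s$), I would let $N\subseteq E_s$ be the normal closure of $F$ over $K$, so that $N/K$ is finite Galois. Applying the preceding proposition to the finite extension $N/K$ shows that $[N:K]$ is a power of $p$; hence $G:=\Gal(N/K)$ is a finite $p$-group. Setting $H:=\Gal(N/F)$, the Galois correspondence gives $[G:H]=[F:K]=p^n$.

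The key input is the group-theoretic claim that any subgroup $H$ of a finite $p$-group $G$ with $[G:H]=p^n$ fits into a chain $H=H_n\subseteq H_{n-1}\subseteq\dots\subseteq H_0=G$ with $[H_{i-1}:H_i]=p$ for every $i$. I would prove this by descending induction on the index using the normalizer-growth property of $p$-groups: whenever $H\subsetneq G$ one has $H\subsetneq N_G(H)$, so the nontrivial $p$-group $N_G(H)/H$ contains an element of order $p$ by Cauchy's theorem, and the preimage $H'$ of the subgroup it generates satisfies $H\triangleleft H'$ and $[H':H]=p$. Replacing $H$ by $H'$ and iterating builds the chain from the bottom up.

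Finally I would pass back to fields by taking fixed points. Applying the Galois correspondence to the chain $G=H_0\supseteq H_1\supseteq\dots\supseteq H_n=H$ yields fields $N^{H_0}\subseteq N^{H_1}\subseteq\dots\subseteq N^{H_n}$, that is $K\subseteq\dots\subseteq F$; each term contains $K=N^{G}$ and is contained in $F=N^{H}$ precisely because $H\subseteq H_i\subseteq G$, and $[N^{H_i}:N^{H_{i-1}}]=[H_{i-1}:H_i]=p$. After renumbering to match the notation of the statement, this is the required tower with each successive extension of degree $p$.

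I expect the only genuine content to be the $p$-group chain lemma, so the main obstacle is really a matter of care rather than difficulty: one must keep the Galois correspondence straight—larger subgroups correspond to smaller fixed fields—and check that the intermediate fields lie between $K$ and $F$, not merely inside $N$, which is guaranteed by the inclusions $H\subseteq H_i\subseteq G$.
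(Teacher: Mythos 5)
Your proof is correct and takes essentially the same route as the paper: both translate the statement via the Galois correspondence into the fact that a subgroup of index $p^n$ in a finite $p$-group sits at the bottom of a chain of subgroups with successive indices $p$ (normalizer growth / Sylow theory). The only real difference is one of packaging: the paper works inside the pro-$p$ group $\Gal(E_s/E_{\ev{p}})$ and passes to the finite quotient by the normal core of the open subgroup fixing $F$, whereas you take the finite Galois group of the normal closure $N$ of $F$ over $E_{\ev{p}}$ --- which is exactly the fixed field of that normal core, so the finite $p$-group is the same --- your version avoiding profinite language at the cost of invoking the preceding proposition to know $[N:E_{\ev{p}}]$ is a power of $p$.
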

\begin{proof}
We prove the result by induction on $n$. We need to find a subfield $K$ of $F$ whose degree over $E_{\ev{p}}$ is $p^{n-1}$. The group $G=\operatorname{Gal}(E_s/E_{\ev{p}})$ is a pro-$p$-group since all finite  extensions of $E_{\ev{p}}$ contained in $E_s$ are $p$-power extensions. Galois theory implies that $E$ is the fixed subfield of a subgroup $H$ of $G$ with $[G:H]=p^n$. We will find a subgroup $H_1$, such that $H\subset H_1 \subset G$ and $[G:H_1]=p^{n-1}$. Letting $K=E_s^{H_1}$, we will get the desired subfield $K$.
\par The group $H$ is subgroup of $G$ of finite index hence is open. By the class equation, it also follows that $H$ has only a finite number of conjugates in $G$. Let $H'=\cap_{x\in G} x^{-1}Hx$, then $H'$ is an open normal subgroup of $G$ containing $H$. The group $G/H'$ is a finite $p$-group containing $H/H'$. By the Sylow theorems, we can find $H_1$, normal in $G$, with $H\subset H_1 \subset G$ and $[G:H_1]=p^{n-1}$. This ends to proof. 
\end{proof}

Similarly, we obtain the following result.
\begin{Lem} \label{Lem7.3.7}
Let $p$ be a prime number and $E$ a $p$-primary field. Let $F/E$ be a finite extension.
\begin{enumerate}
\item The field $F$ inherits the property of having no nontrivial finite extension of degree prime to $p$.
\item If $F\neq E$, then there exists a subfield $E\subset F'\subset  F$ such that $F'/E$ is a normal extension of degree $p$.
\end{enumerate}
\end{Lem}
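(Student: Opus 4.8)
The plan is to reduce both statements to the structural fact, established for $E_{\ev{p}}$ in the preceding proposition, that every finite extension of a $p$-primary field has degree a power of $p$. First I would record this fact for an arbitrary $p$-primary field $E$ by repeating the Sylow argument: given a finite separable extension $F/E$, pass to its Galois closure $F_N/E$ and let $S$ be a Sylow $p$-subgroup of $G=\Gal(F_N/E)$; the fixed field $F_N^{S}$ has degree $[G:S]$ prime to $p$ over $E$, so $p$-primariness forces $F_N^{S}=E$, i.e. $G=S$ is a $p$-group and $[F:E]$ divides $|G|$, a power of $p$. In characteristic $\neq p$ a $p$-primary field has no nontrivial purely inseparable extension (such an extension would have degree prime to $p$), hence is perfect and all its finite extensions are separable; in characteristic $p$ the purely inseparable part of any finite extension already has $p$-power degree. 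Thus in every characteristic $[F:E]$ is a power of $p$, equivalently $\Gal(E_s/E)$ is a pro-$p$-group.

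For part (1), let $L/F$ be a finite extension with $[L:F]$ prime to $p$. Then $L/E$ is finite, so by the fact above $[L:E]=[L:F]\,[F:E]$ is a power of $p$; since $[L:F]$ divides this power and is prime to $p$, it equals $1$, so $L=F$. Hence $F$ has no nontrivial finite prime-to-$p$ extension, i.e. $F$ is again $p$-primary.

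For part (2), since $F\neq E$ we have $[F:E]=p^{n}$ with $n\geq 1$. I would treat the separable case through Galois theory: assuming $F/E$ separable, embed $F$ in $E_s$, write $G=\Gal(E_s/E)$ (pro-$p$) and $H=\Gal(E_s/F)$, an open subgroup of index $p^{n}$. Passing to the finite $p$-group $\bar G=G/\operatorname{core}(H)$, the image $\bar H$ is a proper subgroup, hence is contained in a maximal subgroup $\bar N$; the standard fact that maximal subgroups of a finite $p$-group are normal of index $p$ gives $\bar N\trianglelefteq\bar G$ with $[\bar G:\bar N]=p$. Pulling back to an open normal subgroup $N\trianglelefteq G$ containing $H$ with $[G:N]=p$ and setting $F'=E_s^{N}$ yields the desired normal degree-$p$ subextension $E\subset F'\subset F$. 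If $F/E$ is not separable (so $\operatorname{char}(E)=p$), I would either apply this argument to the nontrivial separable subextension $F_{\mathrm{sep}}/E$ when $F_{\mathrm{sep}}\neq E$, or, when $F/E$ is purely inseparable, pick $\alpha\in F\setminus E$ with $\alpha^{p}\in E$ (obtained from any generator by raising to a suitable $p$-power) and take $F'=E(\alpha)$, which is purely inseparable of degree $p$ and therefore normal.

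The routine steps are the Sylow and degree computations; the main point to get right is the passage from the field-theoretic statement to the group-theoretic one, namely that a proper open subgroup of the pro-$p$ group $\Gal(E_s/E)$ sits inside an open normal subgroup of index $p$. The only genuine subtlety is the bookkeeping for inseparable extensions, where a normal extension of degree $p$ must be produced directly rather than through the Galois correspondence; everything else follows formally from the $p$-power-degree fact.
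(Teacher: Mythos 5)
Your proof is correct, and it follows essentially the same route the paper intends: the paper offers no separate argument for this lemma, asserting only that it follows ``similarly'' to the two preceding propositions on $E_{\ev{p}}$, and those proofs are precisely your two ingredients --- the Galois-closure/Sylow argument showing every finite separable extension of a $p$-primary field has $p$-power degree, and the passage to the finite $p$-group $G/\bigcap_{x\in G}x^{-1}Hx$ in which a proper subgroup is contained in a normal subgroup of index $p$ (the paper phrases this via Sylow theorems, you via maximal subgroups of $p$-groups; these are the same standard fact). The one place you go beyond the paper is worth noting: the preceding propositions only treat extensions contained in the fixed separable closure $E_s$, whereas the lemma concerns arbitrary finite extensions, and in the paper's setting (finitely generated extensions of a perfect field, possibly of characteristic $p$) inseparable finite extensions genuinely occur. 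Your explicit treatment of that case --- perfection of $p$-primary fields in characteristic $\neq p$, the $p$-power degree of the purely inseparable part in characteristic $p$, and the direct construction of the normal degree-$p$ subextension $E(\alpha)$ with $\alpha^{p}\in E$ when $F/E$ is purely inseparable (normal because the minimal polynomial is $(X-\alpha)^{p}$) --- fills a gap that the paper's ``similarly'' passes over silently, and it is needed for the lemma as stated.
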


\subsection{Transfers on Milnor-Witt K-theory}
\label{subsectionTransfersKMW}

We refer to \cite[§3]{Mor12} or \cite[§1]{Fel18} for the definitions and basic properties regarding Milnor-Witt K-theory.
Recall the definition of transfers on Milnor-Witt K-theory, this follows from the original definition of Bass-Tate (see \cite{BassTate73}, see also \cite{GS17}).

\begin{The}[Homotopy invariance] \label{ThmHomInvarianceHIbis}
Let $F$ be a field and $F(t)$ the field of rational fractions with coefficients in $F$ in one variable $t$. We have a split short exact sequence

\begin{center}
$\xymatrix@C=10pt@R=20pt{
0 \ar[r] &  \kMW_*(F) \ar[r]^-{\res}   & \kMW_*(F(t)) \ar[r]^-d  &  \bigoplus_{x\in {(\AAA_F^1)}^{(1)}} \kMW_{*-1}(\kappa(x), \LLL_{x}) \ar[r] & 0
}
$
\end{center}
where $d=\bigoplus_{x\in {(\AAA_F^1)}^{(1)}}\partial_x$ is the usual differential.
\end{The}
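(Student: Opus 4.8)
The plan is to transpose Milnor's computation of $K^M_*(F(t))$, in the filtered form used by Bass--Tate, to the Milnor-Witt setting: one filters $\kMW_*(F(t))$ by the degrees of the polynomials occurring in the symbols and identifies the successive quotients with the residues at the closed points of $\AAA^1_F$. First I would settle the injectivity of $\res$ and the splitting simultaneously. The place at infinity of $F(t)$ has uniformizer $t^{-1}$ and residue field $F$, and the associated specialization homomorphism $s_\infty\colon \kMW_*(F(t)) \to \kMW_*(F)$ satisfies $s_\infty \circ \res = \id$; this shows that $\res$ is a split monomorphism, whence the sequence will be split once exactness is established. (The place at infinity does not belong to $(\AAA^1_F)^{(1)}$, so it does not interfere with the residues appearing in $d$.)

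For the exactness I would introduce, for each integer $n \ge 0$, the $\kMW_*(F)$-submodule $L_n \subseteq \kMW_*(F(t))$ (the action being through $\res$) generated by the products $[g_1]\cdots[g_m]$ of symbols of nonzero polynomials $g_i \in F[t]$ with $\deg g_i \le n$. Since a polynomial of degree $0$ is a constant, $L_0 = \res(\kMW_*(F))$; and $\bigcup_n L_n = \kMW_*(F(t))$ because every element of $F(t)^*$ is a constant times a product of monic irreducible polynomials and symbols of products expand through the relation $[fg] = [f] + [g] + \eta[f][g]$. If $x \in (\AAA^1_F)^{(1)}$ corresponds to the monic irreducible $\pi_x$ of degree $n$, then every nonzero polynomial of degree $< n$ is a unit for the valuation $v_x$, so $\partial_x$ annihilates $L_{n-1}$; hence $d$ descends to each graded quotient $L_n/L_{n-1}$.

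The heart of the matter, and the step I expect to be the main obstacle, is to prove that for $n \ge 1$ the residues induce an isomorphism
\[
L_n/L_{n-1} \xrightarrow{\ \sim\ } \bigoplus_{\deg(x)=n} \kMW_{*-1}(\kappa(x), \LLL_x).
\]
Surjectivity is a lifting problem: given $x$ of degree $n$ with $\kappa(x) = F[t]/(\pi_x)$ and a class $\alpha$, one must produce an element of $L_n$ whose residue at $x$ is $\alpha$ and whose residues at the remaining degree-$n$ points vanish, using that $\kMW_{*-1}(\kappa(x))$ is generated over $\GW(\kappa(x))$ by symbols of the reductions of polynomials of degree $< n$. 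Injectivity requires showing that any element of $L_n$ with vanishing residues at all points of degree $n$ already lies in $L_{n-1}$, which one extracts from the relations among leading coefficients of the generating symbols modulo $L_{n-1}$. The genuinely Milnor-Witt difficulty, absent from the classical Milnor case, is twofold: one must carry the twist by $\LLL_x = (\mathfrak{m}_x/\mathfrak{m}_x^2)^\vee$ coherently through every residue and specialization, and one must control the $\ev{\cdot}$- and $\eta$-bookkeeping that arises when commuting symbols and reducing leading coefficients, checking that the sections one writes down are independent of these choices. Granting this graded isomorphism, exactness in the middle follows by descending induction on the filtration: if $\xi \in \ker d$ lies in $L_n$ with $n \ge 1$, then its class in $L_n/L_{n-1}$ has all degree-$n$ residues zero, so $\xi \in L_{n-1}$, and iterating yields $\xi \in L_0 = \im \res$; the reverse inclusion $\im \res \subseteq \ker d$ holds because residues annihilate constants.
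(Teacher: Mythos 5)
Your strategy is the right one---indeed it is the strategy of the proof the paper defers to: the paper's own proof of Theorem \ref{ThmHomInvarianceHIbis} is the citation \cite[Theorem 5.38]{Mor12}, and Morel's argument (at the level of fields, in \cite[\S 3.2]{Mor12}) is precisely the Milnor--Bass--Tate filtration you set up, split by specialization at infinity. Your peripheral steps are all sound: $s_\infty\circ\res=\id$ makes $\res$ a split monomorphism, the submodules $L_n$ exhaust $\kMW_*(F(t))$, $\partial_x$ annihilates $L_{n-1}$ when $\deg x=n$, and both middle-exactness and surjectivity of $d$ do follow from the graded isomorphism by induction along the filtration.

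The genuine gap is that the graded isomorphism $L_n/L_{n-1}\cong\bigoplus_{\deg x=n}\kMW_{*-1}(\kappa(x),\LLL_x)$ is never proved: you call it ``the main obstacle'' and then proceed by granting it, but it carries essentially all the content of the theorem. Concretely, two things are missing. (i) A normal-form statement: modulo $L_{n-1}$, every element of $L_n$ is a $\kMW_*(F)$-linear combination of elements $\eta^m[\pi,g_1,\dots,g_l]$ with $\pi$ monic irreducible of degree $n$ occurring exactly once and in the first slot, and $\deg g_i<n$; this is the filtration-level analogue of the Bass--Tate--Morel lemma (Proposition \ref{BassTateMorelLemma}), and already requires the Milnor--Witt-specific manipulations (commuting two symbols costs a factor $\ev{-1}$, and $[fg]=[f]+[g]+\eta[f][g]$ creates $\eta$-correction terms that must be absorbed). (ii) The construction of the inverse map: for each $x$ with minimal polynomial $\pi$, the assignment $\eta^m[\bar g_1,\dots,\bar g_l]\mapsto \eta^m[\pi,g_1,\dots,g_l]$ modulo $L_{n-1}$ (with $g_i$ the representative of degree $<n$) must be shown to be well defined, i.e.\ to kill all defining relations of Milnor--Witt K-theory of $\kappa(x)$---the Steinberg relation, $[ab]=[a]+[b]+\eta[a][b]$, $\eta$-commutation, and $\eta\cdot h=0$---after reduction modulo $L_{n-1}$, and to be compatible with the twist by $\LLL_x$ (Morel untwists via the canonical generator determined by $\pi$, exactly as the paper's footnote to the theorem indicates). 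This verification is the bulk of Morel's proof and strictly exceeds ``$\ev{\cdot}$- and $\eta$-bookkeeping''; similarly, injectivity is not extracted from ``relations among leading coefficients'' but from checking that this map and $\bigoplus_{\deg x=n}\partial_x$ are mutually inverse on the generators produced in (i). As written, the proposal reduces the theorem to an unproven claim that is equivalent to it.
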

 \begin{proof}
See \cite[Theorem 5.38]{Mor12}\footnote{In fact, Morel does not use twisted sheaves but chooses a canonical generator for each $\LLL_x$ instead, which is equivalent.}.
\end{proof}

\begin{Par} \label{BassTateTransfersDefinitionBis}
Let $\phi:E\to F$ be a monogenous finite field extension and choose $x\in F$ such that $F=E(x)$. The homotopy exact sequence implies that for any $\beta\in \kMW_*(F,\LLL_{F/k})$ there exists ${\gamma\in \kMW_*(E(t),\LLL_{E(t)/k})}$ with the property that $d(\gamma)=\beta$. Now the valuation at $\infty$ yields a morphism
 
 \begin{center}
 ${\partial_\infty:\kMW_{*+1}(E(t),\LLL_{E(t)/k})\to
   \kMW_*(E,\LLL_{E/k})}$
   
    \end{center}  which vanishes on the image of $i_*$. We denote by $\phi^*(\beta)$ or by $\operatorname{Tr}_{x/E}(\beta)$ the element $-\partial_{\infty}(\gamma)$; it does not depend on the choice of $\gamma$. This defines a group morphism
  \begin{center}
  
  $\Tr_{x/E}:\kMW_*(E(x),\LLL_{F/k})\to \kMW_*(E,\LLL_{E/k})$
  \end{center}
  called the {\em transfer map} and also denoted by $\operatorname{Tr}_{x/E}$. The following result completely characterizes the transfer maps.

\end{Par}
\begin{Lem} Keeping the previous notations, let
  \begin{center}
  
  $d=(\bigoplus_x d_x) \oplus d_{\infty}:\kMW_{*+1}(E(t),\LLL_{F(t)/k})\to
   (\bigoplus_x\kMW_*(E(x),\LLL_{E(x)/k})\oplus \kMW_*(E,\LLL_{E/k})$
  \end{center}
  be the total twisted residue morphism (where $x$ runs through the set of monic irreducible polynomials in $E(t)$). Then, the transfer maps $\operatorname{Tr}_{x/E}$ are the unique morphisms such that $\sum_x ( \operatorname{Tr}_{x/E}\circ d_x)+d_\infty=0$.
  \end{Lem}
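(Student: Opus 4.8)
The plan is to establish both halves of the statement—that the maps $\Tr_{x/E}$ do satisfy $\sum_x(\Tr_{x/E}\circ d_x)+d_\infty=0$, and that they are the only morphisms doing so—using nothing more than the homotopy exact sequence of Theorem \ref{ThmHomInvarianceHIbis} (applied with base field $E$ in place of $F$) together with the two features of the transfer recorded in \ref{BassTateTransfersDefinitionBis}: the finite residue map $\bigoplus_x d_x$ is surjective with kernel $\im(\res)$, and $\partial_\infty=d_\infty$ vanishes on $\im(\res)$.

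For existence I fix $\gamma\in\kMW_{*+1}(E(t),\LLL_{E(t)/k})$. Only finitely many of the finite residues $d_x(\gamma)$ are nonzero; call $S$ the resulting finite set. For each $x\in S$, surjectivity of the finite residue map provides a lift $\gamma_x$ whose residue at $x$ equals $d_x(\gamma)$ and whose residue at every other finite point vanishes. By the very definition of the transfer, $\Tr_{x/E}(d_x(\gamma))=-\partial_\infty(\gamma_x)$, the independence-of-lift clause of \ref{BassTateTransfersDefinitionBis} guaranteeing that this particular $\gamma_x$ is as good as any other choice. Now $\gamma-\sum_{x\in S}\gamma_x$ has all finite residues equal to zero, hence lies in $\im(\res)$; since $d_\infty$ kills this subgroup we get $d_\infty(\gamma)=\sum_{x\in S}\partial_\infty(\gamma_x)=-\sum_x\Tr_{x/E}(d_x(\gamma))$, which is exactly the asserted identity.

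For uniqueness I suppose morphisms $T_x:\kMW_*(E(x),\LLL_{E(x)/k})\to\kMW_*(E,\LLL_{E/k})$ satisfy $\sum_x(T_x\circ d_x)+d_\infty=0$. Fix a monic irreducible $x_0$ and an arbitrary class $\beta\in\kMW_*(E(x_0),\LLL_{E(x_0)/k})$. Again by surjectivity of the finite residue map I choose $\gamma$ with $d_{x_0}(\gamma)=\beta$ and $d_x(\gamma)=0$ for all finite $x\neq x_0$. Substituting $\gamma$ into the relation collapses the sum to a single term, giving $T_{x_0}(\beta)=-d_\infty(\gamma)=-\partial_\infty(\gamma)$, which is precisely $\Tr_{x_0/E}(\beta)$ by definition. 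As $x_0$ and $\beta$ were arbitrary, $T_{x_0}=\Tr_{x_0/E}$ for every $x_0$.

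The genuinely substantive input is Theorem \ref{ThmHomInvarianceHIbis}: both halves hinge on the exactness of the homotopy sequence, that is, on surjectivity of $\bigoplus_x d_x$ and the identification of its kernel with $\im(\res)$. The only real bookkeeping I expect to watch is the compatibility of the line-bundle twists: one must check that the target twists $\LLL_{E(x)/k}$ and $\LLL_{E/k}$ appearing in the residue maps are exactly those produced by the twisted residues of Theorem \ref{ThmHomInvarianceHIbis} and by $\partial_\infty$, so that the composites $\Tr_{x/E}\circ d_x$ and $d_\infty$ land in the same group and the sum is meaningful. Granting this identification, the argument is purely formal.
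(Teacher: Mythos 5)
Your proof is correct and is exactly the ``straightforward'' argument the paper intends (its own proof simply defers to \cite[\S 4.2]{Mor12}): both halves are formal consequences of the exactness of the sequence in Theorem~\ref{ThmHomInvarianceHIbis} together with the definition of $\Tr_{x/E}$ in \ref{BassTateTransfersDefinitionBis}, applied to lifts whose finite residues are concentrated at a single closed point. The twist bookkeeping you flag is indeed the only point needing care (the statement's $\LLL_{F(t)/k}$ is a typo for $\LLL_{E(t)/k}$), and it is harmless.
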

  \begin{proof}
Straightforward (see \cite[§4.2]{Mor12}).
  \end{proof}

\begin{Def}
Let $F=E(x_1,x_2,\dots, x_r)$ be a finite extension of a field $E$ and consider the chain of subfields
\begin{center}

$E\subset E(x_1)\subset E(x_1,x_2)\subset \dots \subset E(x_1,\dots , x_r)=F.$
\end{center}
Define by induction:
\begin{center}
$\Tr_{x_1,\dots,x_r/E}=\Tr_{x_r/E(x_1,\dots,x_{r-1})}\circ \dots \circ \Tr_{x_2/E(x_1)}\circ \Tr_{x_1/E}$
\end{center}
\end{Def}

We give an elementary proof of the fact that the definition does not depend on the choice of the factorization (see 	\cite[Theorem 4.27]{Mor12} for the original proof):

\begin{The} \label{KatoMorel}
The maps $\Tr_{x_1,\dots,x_r/E}:\kMW_*(F)\to \kMW_*(E)$ do not depend on the choice of the generating system $(x_1,\dots , x_r)$.

\end{The}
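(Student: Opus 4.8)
The plan is to follow Kato's strategy for Milnor K-theory (see \cite{GS17}): first reduce the independence statement to the case where the base field is $p$-primary, and then settle that case by induction on the degree using the structural Lemma \ref{Lem7.3.7}. Fix the extension $F/E$ together with two generating systems; by the inductive definition of the iterated transfer it suffices to show that the two resulting maps $\kMW_*(F)\to\kMW_*(E)$ coincide. Writing $\alpha$ for the difference of the two transfers evaluated at a fixed class $\beta$, the goal becomes $\alpha=0$ in $\kMW_*(E)$ (with the appropriate twist by $\LLL_{E/k}$).

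The reduction engine rests on two ingredients. First, a detection principle: if $\res_{E_{\ev{p}}/E}(\alpha)=0$ for every prime $p$, then $\alpha=0$. Indeed, since $\kMW_*$ commutes with filtered colimits of fields and $E_{\ev{p}}=\colim_L L$ over its finite prime-to-$p$ subextensions, vanishing over $E_{\ev{p}}$ forces $\res_{L/E}(\alpha)=0$ for some finite $L/E$ of degree prime to $p$. Applying $\Tr_{L/E}$ and the identity $\Tr_{L/E}\circ\res_{L/E}=[L:E]_\epsilon\cdot(-)$ gives $[L:E]_\epsilon\,\alpha=0$. Thus the set $S=\{n\in\NNN:\ n_\epsilon\,\alpha=0\}$ contains an integer prime to $p$ for every $p$, is closed under multiples, and obeys $(nm)_\epsilon=n_\epsilon m_\epsilon$. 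Starting from any $n\in S$ with a prime factor $q\mid n$, choosing $n'\in S$ prime to $q$ and applying the quadratic Bézout relation $1=r\,u_\epsilon+s\,v_\epsilon$ (for $u,v$ coprime, in $\GW(E)$) to $u=n/\gcd(n,n')$ and $v=n'/\gcd(n,n')$ produces $\gcd(n,n')\in S$ with strictly fewer prime factors; iterating yields $1\in S$, i.e.\ $\alpha=0$. Second, a prime-to-$p$ base-change compatibility: over the $p$-primary field $E_{\ev{p}}$ the algebra $F\otimes_E E_{\ev{p}}$ splits as a product of finite field extensions, and the iterated transfers are compatible with this base change. Granting the $p$-primary case, each $\res_{E_{\ev{p}}/E}(\alpha)$ vanishes, and the detection principle finishes the reduction.

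It remains to prove independence when $E$ itself is $p$-primary, which I expect to be the main obstacle. Here I would induct on $[F:E]$. For $F\neq E$, Lemma \ref{Lem7.3.7} furnishes a normal subextension $E\subset F'\subset F$ with $[F':E]=p$, and $F'$ is again $p$-primary, so by induction the transfers for $F/F'$ and $F'/E$ are already independent of generators; by the tower property the composite $\Tr_{F/F'}\circ\Tr_{F'/E}$ is a well-defined candidate. The task is to show that the transfer attached to an \emph{arbitrary} generating system of $F/E$ equals this composite. This splits into two parts: that transfers along any refinement of the tower through $F'$ agree (immediate from the inductive hypothesis and associativity of composition), and the comparison of an arbitrary generating system with such a refined one.

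The crux is therefore a single elementary reordering statement at the bottom degree-$p$ normal step, which I would analyse directly through the residue description of Theorem \ref{ThmHomInvarianceHIbis}: using normality of $F'/E$ and the explicit action on the residues $\partial_x$ at the closed points of $\AAA^1_E$, the transfer along $F'/E$ gets pinned down independently of the chosen primitive element, and this rigidity propagates up the tower via the characterizing relation $\sum_x(\Tr_{x/E}\circ d_x)+d_\infty=0$. The genuine difficulty, I expect, lies in this final computation — controlling how a change of generators permutes the contributions of the various residue maps while keeping track of the line-bundle twists $\LLL_x$ — since, unlike in Milnor K-theory, the Milnor-Witt coefficients oblige one to carry the quadratic factors $\ev{-1}$ and the orientation data through every step.
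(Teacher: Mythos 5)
Your overall architecture matches the paper's (a restriction--transfer--coprimality reduction to $p$-primary fields, then an induction on the degree using Lemma \ref{Lem7.3.7}), but both halves of your argument have genuine gaps, and the first one is an actual error rather than an omission.

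Your detection principle rests on the identity $\Tr_{L/E}\circ\res_{L/E}=[L:E]_\epsilon\cdot(-)$, which is false in Milnor--Witt K-theory; this is precisely the point where $\kMW_*$ departs from Milnor K-theory. By the projection formula the composite is multiplication by $\Tr_{L/E}(1)$, and (Lemma \ref{LamVII2.2}) one has $\Tr_{L/E}(1)=n_\epsilon$ only when $n=[L:E]$ is odd; for $n$ even, $\Tr_{L/E}(1)=(n-1)_\epsilon+\ev{-N_{L/E}(x)}$, which differs from $n_\epsilon$ unless the norm is a square. Consequently your set $S=\{n\in\NNN:\ n_\epsilon\alpha=0\}$ only demonstrably receives degrees coming from odd-degree extensions, i.e.\ from the prime $2$: for an odd prime $q$, the finite prime-to-$q$ extension $L_q$ may well have even degree, and you cannot place $[L_q:E]$ in $S$, so your Bezout induction never gets started. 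The paper's Lemmas \ref{CorQuadraticBezout} and \ref{FundamentalLemma} are designed exactly to bridge this: in characteristic zero one works with $\Tr_{L/E}(1)=\sum_i\ev{a_i}$ itself and exploits $\ev{a_i}h=h$ for the hyperbolic form $h=1+\ev{-1}$ to manufacture the unit $1=(1-ah)\Tr_{F/E}(1)-bh\,\Tr_{L/E}(1)$; in characteristic $p>0$, where the characteristic-zero trace-form formulas are unavailable (a case your proposal never addresses), one instead uses that rank-zero elements of $\GW(E)$ are nilpotent (Lemma \ref{RiousLemma}) and raises a Bezout combination to a $p$-th power. A smaller but real inaccuracy: since $E$ may be imperfect, $F\otimes_E E_{\ev{p}}$ is generally \emph{not} a product of fields, so the base-change compatibility must carry the multiplicities $(m_{\mathfrak{p}})_{\epsilon}$ as in Theorem \ref{R1c_fort_KMW}.

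For the $p$-primary case, you correctly locate the crux but then defer exactly the steps that carry the content, so nothing is proved. What is needed is: (i) independence of $\Tr_{x/E}$ of the primitive element for a degree-$p$ extension of a $p$-primary field --- in the paper this is \emph{not} part of the induction but the base case, obtained from the Bass--Tate--Morel generation statement (Corollary \ref{CorBassTateMorel}) together with the projection formula (Proposition \ref{Prop7.3.8}); (ii) the commutation $\Tr_{L/E}\circ\Tr_{a/L}=\Tr_{a/E}\circ\Tr_{L(a)/E(a)}$ for $L/E$ normal of degree $p$ (Lemma \ref{Lem7.3.12}), which the paper proves by passing to completions via Proposition \ref{Mor12Remark5.20} and Corollary \ref{Mor12Remark5.20Cor}; and (iii) the comparison of two arbitrary towers, which does not proceed by ``refining through $F'$'' as you suggest, but by forming the composite $F_0=E(x_1)E(y_1)$ of the two normal degree-$p$ subfields and applying (ii) together with the inductive hypothesis over the $p$-primary fields $E(x_1)$ and $E(y_1)$. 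Your closing remark that the difficulty lies in controlling the residues and twists through a change of generators is accurate, but that computation is precisely items (i) and (ii); without them your induction has neither a base case nor any mechanism for comparing towers that do not pass through the same degree-$p$ normal subfield.
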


We begin with a series of lemmas aimed at reducing the theorem to the case of $p$-primary fields.

\begin{Lem}\label{LamVII2.2}
Let $F=E(x)/E$ be a simple extension of degree $n$ of characteristic zero fields and consider the transfer map $\Tr_{F/E}:\GW(F)\to \GW(E)$. If $n$ is odd, then
\begin{center}
$\Tr_{F/E}(1)=n_\epsilon.$
\end{center}
If $n$ is even, then
\begin{center}
$\Tr_{F/E}(1)=(n-1)_\epsilon+\langle - N_{F/E}(x) \rangle$.
\end{center}
where $N_{F/E}(x)$ is the classical norm of $x\in F^\times$.
\end{Lem}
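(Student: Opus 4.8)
The plan is to reduce the computation of $\Tr_{F/E}(\langle 1\rangle)$ to an explicit diagonalisation of a symmetric bilinear form over $E$. Write $P(t)=t^n+c_{n-1}t^{n-1}+\cdots+c_1t+c_0$ for the minimal polynomial of $x$, so that $F=E[t]/(P)$ has $E$-basis $1,x,\dots,x^{n-1}$ and $c_0=P(0)=(-1)^nN_{F/E}(x)$. First I would unwind the definition of the transfer from Paragraph \ref{BassTateTransfersDefinitionBis}: choosing the lift $\gamma=[P]\in\kMW_1(E(t))$, whose residue at the place $P$ is the canonical unit (as $P$ is a uniformiser there) and whose residue at every other finite monic irreducible vanishes (as $P$ is a unit there), one gets $\Tr_{F/E}(\langle 1\rangle)=-\partial_\infty(\gamma)$. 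After fixing the canonical generator of the twist $\omega_{F/E}$ used to identify $\kMW_0(F,\omega_{F/E})$ with $\GW(F)$, this identifies $\Tr_{F/E}$ in degree $0$ with the Scharlau transfer attached to the $E$-linear functional $s_0\colon F\to E$ extracting the constant coordinate, $s_0\big(\sum a_ix^i\big)=a_0$. Concretely, $\Tr_{F/E}(\langle 1\rangle)$ is then represented by the Hankel (Bezoutian) Gram matrix $G=\big(s_0(x^{i+j})\big)_{0\le i,j\le n-1}$.

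Then I would diagonalise $G$ by exploiting its staircase structure. Since $s_0(x^m)=\delta_{m,0}$ for $0\le m\le n-1$ and $s_0(x^n)=-c_0$, the first row and column of $G$ are $(1,0,\dots,0)$, so $\langle 1\rangle$ splits off orthogonally and $G\cong\langle 1\rangle\perp G'$, where $G'=\big(s_0(x^{i+j})\big)_{1\le i,j\le n-1}$. The block $G'$ is anti-triangular: its entries vanish for $i+j<n$ and are all equal to $s_0(x^n)=-c_0$ on the anti-diagonal $i+j=n$. A symmetric anti-triangular matrix with invertible anti-diagonal is congruent to an orthogonal sum of hyperbolic planes, one for each pair $\{i,n-i\}$ with $i\ne n-i$, together with the single central form $\langle -c_0\rangle$ when the size $n-1$ is odd; here I would use the relation $\langle a\rangle h=h$, with $h=\langle 1\rangle+\langle -1\rangle$, to see that each such plane equals $h$ in $\GW(E)$.

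The parity of $n$ now dictates the answer. If $n$ is odd then $n-1$ is even, $G'\cong\tfrac{n-1}{2}h$, and hence $\Tr_{F/E}(\langle 1\rangle)=\langle 1\rangle+\tfrac{n-1}{2}h=n_\epsilon$. If $n$ is even then $n-1$ is odd, the central entry survives, $G'\cong\tfrac{n-2}{2}h\perp\langle -c_0\rangle$, and since $c_0=N_{F/E}(x)$ in this case we obtain $\Tr_{F/E}(\langle 1\rangle)=\langle 1\rangle+\tfrac{n-2}{2}h+\langle -N_{F/E}(x)\rangle=(n-1)_\epsilon+\langle -N_{F/E}(x)\rangle$, using $\langle 1\rangle+\tfrac{n-2}{2}h=(n-1)_\epsilon$.

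The main obstacle is the first step: rigorously matching the Milnor--Witt geometric transfer, with its twist by $\omega_{F/E}$ and the different $(P'(x))$, to the concrete constant-coordinate functional $s_0$ (and not, say, the top-coordinate functional, whose associated form is merely hyperbolic and would give the wrong answer). This is exactly the bookkeeping of the canonical generators of $\omega_{E(t)/k}$ and $\omega_{\kappa(P)/k}$ under the residues $\partial_P$ and $\partial_\infty$; the hypothesis of characteristic zero guarantees separability of $F/E$, hence the nondegeneracy of the trace pairing and the identification $\omega_{F/E}\cong\Hom_E(F,E)$ that makes this reduction legitimate. Once this identification is in place, the remaining diagonalisation is elementary linear algebra over $\GW(E)$.
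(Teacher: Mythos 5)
Your second and third paragraphs (the splitting $G\cong\langle 1\rangle\perp G'$, the anti-triangular structure of $G'$, and the parity discussion) are correct, and they are in fact \emph{the} proof of the result the paper invokes: the paper's entire proof of this lemma is the citation to \cite[VII.2.2]{Lam05}, and Lam's statement there is precisely the computation of the Scharlau transfer $(s_0)_*\langle 1\rangle$ for the constant-coordinate functional, proved by the same anti-triangular diagonalisation. So on the computational side you have reconstructed the cited argument rather than cited it, which is fine.

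The genuine gap is the step you yourself flag as ``the main obstacle'', and it is worse than bookkeeping: as stated, it fails. Run your own first step with the normalisations implicit in the paper and in \cite{Mor12} (uniformiser $P$ at the closed point defined by $P$, uniformiser $\sigma=-1/t$ at infinity), taking $n$ even: write $P=\sigma^{-n}u$ with $\bar u=1$; since $\eta[\sigma^{-n}]=\langle\sigma^{-n}\rangle-1=0$, one gets $[P]=[\sigma^{-n}]+[u]$ with $[\sigma^{-n}]=-n_\epsilon[\sigma]$, hence $\Tr_{x/E}(\langle 1\rangle)=-\partial_\infty([P])=n_\epsilon=\frac{n}{2}h$. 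This differs from $(n-1)_\epsilon+\langle -N_{F/E}(x)\rangle$ whenever $N_{F/E}(x)$ is not a square: for $F=\QQ(\sqrt5)$, $x=\sqrt5$, one has $h\neq\langle 1,5\rangle$ in $\GW(\QQ)$. The structural reason is that no normalisation of the transfer that is independent of the generator can satisfy the even-degree formula, because its right-hand side depends on $x$ (again for $F=\QQ(\sqrt5)$: $x=\sqrt5$ gives $\langle 1,5\rangle$, while $x=3+\sqrt5$ has $N(x)=4$ and gives $\langle 1,-4\rangle=h$); by contrast the canonically twisted transfer of Paragraph \ref{BassTateTransfersDefinitionBis} is generator-independent, and the generator of $\omega_{F/E}\cong\Hom_E(F,E)$ singled out by the presentation $F=E[t]/(P)$ corresponds, via Euler's formulas, to your \emph{top}-coordinate functional $\tr_{F/E}(\,\cdot\,/P'(x))$, not to $s_0$. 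So the lemma, exactly as in \cite{Lam05}, is a statement about the $x$-dependent Scharlau transfer $(s_0)_*$, and matching it to the geometric transfer requires untwisting $\omega_{F/E}$ by an $x$-dependent generator; asserting that ``the canonical generator'' does this is precisely the step that is false, not merely unproven. To be fair, the paper is silent on the same point, and its later applications (Lemmas \ref{CorQuadraticBezout} and \ref{FundamentalLemma}) only use what your computation does establish for the geometric transfer under any reasonable untwisting: $\Tr_{F/E}(1)=n_\epsilon$ for $n$ odd, and a sum of rank-one forms $\sum_i\langle a_i\rangle$ for $n$ even.
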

\begin{proof}
See \cite[VII.2.2]{Lam05}.
\end{proof}

\begin{Lem} 
Let $F/E$ be a finite extension of degree $n$ of characteristic zero fields and consider the transfer map $\Tr_{F/E}:\GW(F)\to \GW(E)$. If $n$ is odd, then
\begin{center}
$\Tr_{F/E}(1)=n_\epsilon.$
\end{center}
If $n$ is even, then there exist $a_1,\dots a_n\in E^\times$ such that
\begin{center}
$\Tr_{F/E}(1)=\sum_i \ev{a_i}.$
\end{center}
\end{Lem}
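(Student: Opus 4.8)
The plan is to reduce the statement immediately to the simple case already settled in Lemma~\ref{LamVII2.2}. Since $E$ has characteristic zero, the extension $F/E$ is separable, so the primitive element theorem provides an $x\in F$ with $F=E(x)$. For this choice the transfer $\Tr_{F/E}$ is, by definition, the transfer $\Tr_{x/E}$ attached to the simple extension $E(x)/E$, so Lemma~\ref{LamVII2.2} applies verbatim and the whole lemma becomes a formal unfolding of that one.

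First I would dispatch the odd case: if $n$ is odd, Lemma~\ref{LamVII2.2} gives directly $\Tr_{F/E}(1)=n_\epsilon$, which is exactly the asserted formula, so nothing further is required. For the even case, Lemma~\ref{LamVII2.2} yields
\[
\Tr_{F/E}(1)=(n-1)_\epsilon+\ev{-N_{F/E}(x)},
\]
and it remains only to rewrite $(n-1)_\epsilon$ as a genuine diagonal form. By definition $(n-1)_\epsilon=\sum_{i=1}^{n-1}\ev{-1}^{i-1}=\sum_{i=1}^{n-1}\ev{(-1)^{i-1}}$, a sum of $n-1$ rank-one forms. Setting $a_i=(-1)^{i-1}$ for $1\le i\le n-1$ and $a_n=-N_{F/E}(x)$, which lies in $E^\times$ because $x\in F^\times$, I obtain $\Tr_{F/E}(1)=\sum_{i=1}^{n}\ev{a_i}$ with the $a_i\in E^\times$, as claimed.

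The argument presents no serious obstacle; the one point deserving care is the identification of $\Tr_{F/E}$ with the simple-extension transfer $\Tr_{x/E}$ for a primitive element $x$, together with the matching of Morel's geometric transfer on $\kMW_0=\GW$ with the trace-form (Scharlau) transfer computed by Lam—an identification already implicit in the statement of Lemma~\ref{LamVII2.2}. Once that is granted, the result is a purely formal consequence of the simple case and of the definition of $n_\epsilon$. I note that one could alternatively avoid the primitive element theorem by decomposing $F/E$ into a tower of simple extensions and combining the projection formula with the multiplicativity $(nm)_\epsilon=n_\epsilon m_\epsilon$; this is the route that survives in positive characteristic, but in characteristic zero the primitive element reduction is by far the most economical.
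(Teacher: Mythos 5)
You have proved a weaker statement than the one the paper needs, and the step you wave through as ``definitional'' is exactly where the gap sits. At this point of the paper there is no canonical map $\Tr_{F/E}$: a transfer is only defined after choosing a generating system, as the composite of simple transfers along a tower $E\subset E(x_1)\subset\dots\subset E(x_1,\dots,x_r)=F$, and the independence of this composite from the choice of $(x_1,\dots,x_r)$ is precisely Theorem \ref{KatoMorel} --- the theorem this lemma is a step towards proving (via Lemma \ref{CorQuadraticBezout} and Lemma \ref{FundamentalLemma}). So the assertion ``for this choice, $\Tr_{F/E}$ is by definition $\Tr_{x/E}$'' is not available: either your argument establishes the formulas only for the \emph{particular} one-generator system supplied by the primitive element theorem, or it tacitly assumes that all generating systems give the same map, which is circular. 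The lemma, as the paper states it and proves it, is quantified over every generating system --- whichever tower one transfers along, $\Tr_{x_1,\dots,x_r/E}(1)$ equals $n_\epsilon$ for $n$ odd and is a sum of $n$ rank-one forms for $n$ even --- and that is why the paper's own proof is an induction on the number of generators rather than an appeal to a primitive element.

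The repair is essentially the argument you relegate to your closing remark. Put $n_1=[E(x_1):E]$ and $m=[F:E(x_1)]$, and apply the inductive hypothesis to the tower over $E(x_1)$. If $n$ is odd then so are $n_1$ and $m$; since $m_\epsilon$ is restricted from $E$, the projection formula and Lemma \ref{LamVII2.2} give $\Tr_{x_1/E}(m_\epsilon)=m_\epsilon\,(n_1)_\epsilon=(mn_1)_\epsilon=n_\epsilon$. If $n$ is even and $m$ is odd, the same computation yields $m_\epsilon\sum_j\ev{a_j}=\sum_{i,j}\ev{(-1)^{i-1}a_j}$, a sum of $n$ rank-one forms; if $m$ is even, one uses instead that the simple transfer (a Scharlau-type transfer, as in Lemma \ref{LamVII2.2}) carries each $\ev{b_i}$ with $b_i\in E(x_1)^\times$ to an effective form of rank $n_1$, hence to a sum of $n_1$ rank-one forms. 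Your observation is not without value: the later applications only require \emph{some} transfer $F\to E$ with $\Tr(1)$ of the stated shape, so in characteristic zero one could reorganize those arguments around primitive-element transfers. But as a proof of the lemma as stated --- and as it is invoked, for transfers along arbitrary towers --- your argument has a genuine gap.
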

\begin{proof}
When the extension is simple, this is Lemma \ref{LamVII2.2}. We conclude by induction on the number of generators.
\end{proof}

\begin{Lem}\label{RiousLemma}
Let $E$ be a field of characteristic $p>0$. Let $\alpha \in \GW(E) $ be an element in the kernel of the rank morphism $\GW(E)\to \ZZ$. Then $\alpha$ is nilpotent in $\GW(E)$.
\end{Lem}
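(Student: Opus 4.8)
The plan is to transport the statement from the Grothendieck–Witt ring $\GW(E)$ to the Witt ring $W(E)$, where the hypothesis $\operatorname{char}(E)=p>0$ enters through the failure of formal reality. Concretely, I would first set up a reduction: the projection $q:\GW(E)\to W(E)$ is surjective with kernel $\ZZ\cdot h$, where $h=\ev{1}+\ev{-1}$ is the hyperbolic form and $\rk(h)=2$. Writing $\bar\alpha=q(\alpha)$, it is enough to prove that $\bar\alpha$ is nilpotent in $W(E)$: if $\bar\alpha^{N}=0$ then $\alpha^{N}\in\ZZ\cdot h$, and since $\rk(\alpha^{N})=\rk(\alpha)^{N}=0$ we may write $\alpha^{N}=m\,h$ with $0=\rk(\alpha^{N})=2m$, forcing $m=0$ and $\alpha^{N}=0$. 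As $\rk(\alpha)=0$, the class $\bar\alpha$ lies in the fundamental ideal $I\subset W(E)$ of even-dimensional forms, so the whole problem reduces to showing that every element of $I$ is nilpotent.

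The one arithmetic input, and the only place the characteristic is used, is that $2$ is nilpotent in $W(E)$. Since $\operatorname{char}(E)=p>0$, the element $-1$ is a sum of squares, so $E$ is not formally real; its level $s(E)$ is therefore finite, equal to a power $2^{m}$ of $2$, and the additive order of $\ev{1}$ in $W(E)$ equals $2\,s(E)$, whence $2^{m+1}=0$ in $W(E)$. I would quote this from the structure theory of the Witt ring of a non-real field rather than reprove it.

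Granting this, the conclusion is formal. In the commutative ring $W(E)$ the nilpotent elements form an ideal $\operatorname{Nil}(W(E))$. For $a\in E^{\times}$ put $g_{a}=\ev{a}-\ev{1}$; from $\ev{a}\ev{a}=\ev{a^{2}}=\ev{1}$ one obtains $g_{a}^{2}=2\ev{1}-2\ev{a}=-2\,g_{a}$, hence $g_{a}^{N+1}=(-2)^{N}g_{a}=0$ as soon as $2^{N}=0$, so every $g_{a}$ is nilpotent. Since $I$ is generated as an abelian group by the elements $g_{a}$ (note in particular $g_{-1}=-2$), and $\operatorname{Nil}(W(E))$ is an additive subgroup containing all of them, we get $I\subseteq\operatorname{Nil}(W(E))$. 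Thus $\bar\alpha$ is nilpotent, and by the reduction so is $\alpha$.

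The main obstacle is precisely the fact that $2$ is nilpotent in $W(E)$: this is where positivity of the characteristic is indispensable, through non-reality and finiteness of the level, and it cannot hold over $\QQ$, where the signature makes $2$ a nonnilpotent element. The remaining steps are purely formal identities in $\GW(E)$ and $W(E)$. The only point demanding extra care is the characteristic-$2$ case, where symmetric bilinear forms need not diagonalize; there I would invoke the corresponding structural description of $W(E)$ for non-real fields, the mechanism (non-real $\Rightarrow 2$ nilpotent $\Rightarrow I$ nil) being unchanged.
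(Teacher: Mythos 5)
Your proof is correct, and its skeleton actually coincides with the paper's: both exploit the fact that the nilradical of a commutative ring is an ideal in order to reduce to the additive generators $\ev{a}-\ev{1}$ of the rank-zero (respectively fundamental) ideal, and both use the identity $(\ev{a}-\ev{1})^2=-2\,(\ev{a}-\ev{1})$ to convert nilpotence into $2$-power torsion. Where you genuinely diverge is in how that torsion is produced. The paper never leaves $\GW(E)$: for $p=2$ it quotes $2\alpha=0$ (Morel), and for $p\geq 3$ it verifies by explicit isometries that $-x^2-y^2$ represents $1$ over $\mathbb{F}_p$ and that $x^2-y^2$ represents every $t\in E^\times$, giving $(2+\mu)\alpha=0$ with $\mu=\ev{-1}-1$ and $\mu^2=0$, hence $4\alpha=0$ and the explicit bound $\alpha^3=0$. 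You instead embed $\GW(E)$ into $W(E)\times\ZZ$ and invoke two structure theorems: Pfister's result that the level $s(E)$ is a power of $2$, and the fact that the additive order of $\ev{1}$ in $W(E)$ is $2s(E)$. Granting those citations your route is shorter, and it proves the stronger statement that the lemma holds over any field that is not formally real; but it is less self-contained, yields no explicit exponent, and Pfister's theorem is overkill here: since $E\supseteq\mathbb{F}_p$ and every element of $\mathbb{F}_p$ is a sum of two squares, one gets $s(E)\leq 2$, hence $4=0$ in $W(E)$, with no deep input.

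One point needs more care than you give it. In characteristic $2$ the delicate step is not diagonalization (a nondegenerate alternating form is hyperbolic, hence vanishes in $W(E)$, so the elements $g_a$ still generate $I$ additively); it is your claim that $\ker\bigl(\GW(E)\to W(E)\bigr)=\ZZ\cdot h$, on which the whole reduction to $W(E)$ rests. In characteristic $2$ metabolic planes come in two kinds, the alternating plane $\mathbb{H}$ and the forms $\ev{a}\perp\ev{a}$, and the latter are not hyperbolic, so the kernel is a priori larger than $\ZZ\cdot h$. The claim is nevertheless true, because of the isometry $\ev{a}\perp\ev{a}\perp\ev{a}\cong\ev{a}\perp\mathbb{H}$, which forces $2\ev{a}=[\mathbb{H}]=h$ in $\GW(E)$ for every $a$ (see Elman--Karpenko--Merkurjev); this has to be stated and cited, not subsumed under a general appeal to ``structural descriptions''. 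The paper sidesteps the entire issue by settling $p=2$ in one line: $2\alpha=0$, hence $\alpha^2=-2\alpha=0$.
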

\begin{proof}
(See \cite[Lemma B.4]{LYZ16}) As the set of nilpotent elements in the commutative ring $\GW(E)$ is an ideal, we may assume
 $\alpha =\ev{t}-1$ where $t\in E^{\times}$. We have $(1 +\alpha)^2=\ev{t^2}= 1$, so that $\alpha^2=-2\alpha$. 
  By induction, we get $\alpha^n= (-2)^{n-1}\alpha$ for $n \geq 1$: 
  we have to show that $\alpha$ is annihilated by a power of two. If 
  $p= 2$, $2\alpha= 0$ holds (see \cite[Lemma 3.9]{Mor12}), i.e. 
  $\alpha^2= 0$. Now we assume $p\geq 3$ so that there is no danger 
  thinking in terms of usual quadratic forms. We first consider $\mu 
  :=\ev{-1}-1\in \GW(\mathbb{F}_p)$.  The quadratic form $-x^2-y^2$ 
  over $\mathbb{F}_p$ represents $1$ (see \cite[Proposition 
    4,§IV.1.7]{Serre77}) so that $\ev{-1}+\ev{-1}=\ev{1}+\ev{1}\in \GW(\mathbb{F}_p)$, 
  i.e. $2\mu= 0 \in \GW(\mathbb{F}_p)$,which gives $\mu^2=0$. Let 
  $t\in E^{\times}$ be any nonzero element in an extension $E$ of 
  $\mathbb{F}_p$. The quadratic form $q(x,y) :=x^2-y^2= (x+y)(x-y)$ 
  represents $t$ (this is $q((1+t)/2,(1-t)/2)$), which easily implies that 
  $\ev{1}+\ev{-1}=\ev{t}+\ev{-t}$. This is equivalent to saying $(2 
  +\mu)\alpha= 0 \in \GW(E)$. It follows that $4\alpha = (2-\mu)(2 
  +\mu)\alpha= 0$, and then $\alpha^3= 0$.
\end{proof}

\begin{Lem} \label{CorQuadraticBezout}
Consider two finite extensions $F/E$ and $L/E$ of coprime degrees $n$ and $m$, respectively. Let $x\in \kMW(E)$ such that $\res_{F/E}(x)=0=\res_{L/E}(x)$. Then $x=0$.
\end{Lem}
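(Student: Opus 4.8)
The plan is to play the transfers against the vanishing of the restrictions and then finish with a Bézout argument in the Grothendieck--Witt ring, treating the two characteristics separately. First I would invoke the projection formula for the transfer maps, $\Tr_{F/E}(\res_{F/E}(x)\cdot y)=x\cdot\Tr_{F/E}(y)$ (a standard property of the transfers, cf.\ \cite[§4]{Mor12}). Taking $y=1$ and using $\res_{F/E}(x)=0$ gives $x\cdot\Tr_{F/E}(1)=0$ in $\kMW(E)$, and symmetrically $x\cdot\Tr_{L/E}(1)=0$, where the products refer to the $\GW(E)=\kMW_0(E)$-module structure on $\kMW(E)$. Since $\Tr_{F/E}(1)$ has rank $n$ and $\Tr_{L/E}(1)$ has rank $m$, it then suffices to prove that these two elements generate the unit ideal of $\GW(E)$: any relation $1=u\cdot\Tr_{F/E}(1)+v\cdot\Tr_{L/E}(1)$ yields $x=u\,(x\cdot\Tr_{F/E}(1))+v\,(x\cdot\Tr_{L/E}(1))=0$.

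As $\gcd(n,m)=1$, at least one of the degrees is odd; after relabelling $F$ and $L$ I may assume $n$ is odd, so that the transfer computation in odd degree (Lemma \ref{LamVII2.2} together with its consequence for arbitrary finite extensions) gives $\Tr_{F/E}(1)=n_\epsilon$. In characteristic $p>0$ I would conclude at once: choosing integers $a,b$ with $an+bm=1$, the element $\gamma:=a\,n_\epsilon+b\,\Tr_{L/E}(1)$ has rank $1$, so $\gamma=1+\iota$ with $\iota\in I:=\Ker(\rk\colon\GW(E)\to\ZZ)$. By Lemma \ref{RiousLemma} the element $\iota$ is nilpotent, and since $x\cdot\gamma=0$ we get $x=-x\cdot\iota=x\cdot\iota^2=\dots=(-1)^N x\cdot\iota^N=0$ for $N\gg0$; equivalently $\gamma$ is a unit, which is the required Bézout relation.

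The characteristic-zero case is the main obstacle, since there the fundamental ideal $I$ is no longer nilpotent. Here I would argue directly that $(n_\epsilon,\Tr_{L/E}(1))$ is all of $\GW(E)$ by checking that no prime ideal of $\GW(E)$ contains both generators. Using the classification of the primes of $\GW(E)$ (the zero-rank prime $I$, the rank primes $\rk^{-1}(\ell\ZZ)$, and, for each ordering $\sigma$ of $E$, the signature primes $\operatorname{sign}_\sigma^{-1}(\ell\ZZ)$), the rank primes and $I$ cannot contain both elements because $\rk(n_\epsilon)=n$ and $\rk(\Tr_{L/E}(1))=m$ are coprime positive integers, while no signature prime contains $n_\epsilon$ at all, since $\operatorname{sign}_\sigma(n_\epsilon)=1$ for every ordering $\sigma$ (the hyperbolic part of $n_\epsilon$ contributes signature $0$). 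Hence $1=u\,n_\epsilon+v\,\Tr_{L/E}(1)$ for some $u,v\in\GW(E)$ and $x=0$ follows; when $m$ is also odd this recovers exactly the quadratic Bézout relation $1=r\,n_\epsilon+s\,m_\epsilon$. The delicate point I would need to pin down is precisely the signature normalisation $\operatorname{sign}_\sigma(n_\epsilon)=1$ together with the prime-avoidance bookkeeping, which is what makes the odd degree indispensable.
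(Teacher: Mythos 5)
Your proposal has the same skeleton as the paper's proof: apply the projection formula to get that $x$ is killed by $\Tr_{F/E}(1)$ and $\Tr_{L/E}(1)$, elements of $\GW(E)$ of coprime ranks $n$ and $m$, and then produce a Bézout-type relation showing these generate the unit ideal, treating the two characteristics separately with Lemma \ref{RiousLemma} doing the work in positive characteristic. In characteristic $p>0$ your argument is essentially the paper's (the paper deduces that $x$ is killed by the coprime integers $n^{N}$ and $m^{N}$ and concludes by Bézout in $\ZZ$; you observe that the Bézout combination is $1+\iota$ with $\iota$ nilpotent, hence a unit --- the same computation in different packaging). However, there is one slip here: you invoke $\Tr_{F/E}(1)=n_\epsilon$ via Lemma \ref{LamVII2.2} and its corollary, which are stated and proved only for fields of \emph{characteristic zero}; in characteristic $p$ this identity is not supplied by the paper (and is delicate, e.g.\ for inseparable extensions). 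The slip is harmless: since your first paragraph correctly reduces everything to ranks, you should simply take $\gamma:=a\,\Tr_{F/E}(1)+b\,\Tr_{L/E}(1)$, which still has rank $1$; then neither the odd-degree relabelling nor Lemma \ref{LamVII2.2} is needed in positive characteristic, exactly as in the paper.

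In characteristic zero your route genuinely differs from the paper's. The paper writes down an \emph{explicit} relation: with $n=2r+1$ and $an+bm=r$, using $n_\epsilon=1+rh$, the identity $\ev{a_i}h=h$ and the expression $\Tr_{L/E}(1)=\sum_i\ev{a_i}$, it gets $1=n_\epsilon-rh=(1-ah)\Tr_{F/E}(1)-bh\,\Tr_{L/E}(1)$. You instead argue nonconstructively by prime avoidance, using the Harrison--Lorenz--Leicht classification of the prime spectrum of $\GW(E)$: rank primes are excluded by coprimality of $n$ and $m$, and signature primes are excluded because $\operatorname{sign}_\sigma(n_\epsilon)=1$ for every ordering $\sigma$ (this is where $\Tr_{F/E}(1)=n_\epsilon$, legitimately available in characteristic zero, is really needed --- an arbitrary rank-$n$ form could have signature divisible by an odd prime). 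This is correct granted that classification, and it even spares you the paper's auxiliary lemma expressing $\Tr_{L/E}(1)$ as a sum of unary forms, since only its rank enters your argument. The trade-off is that the paper's relation is elementary and self-contained (it uses only Morel's Lemma 3.7 on the hyperbolic form), whereas yours imports a structural theorem on Witt rings as a black box.
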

\begin{proof}
Applying the transfer map to $\res_{F/E}(x)$ and $\res_{L/E}(x)$, we see that $x$ is killed by $\Tr_{F/E}(1)$ and $\Tr_{L/E}(1)$. In characteristic zero,up to swapping $n$ and $m$, we may assume that $n$ is odd, hence $\Tr_{F/E}(1)=n_{\epsilon}$ and $\Tr_{L/E}(1)=\sum_i \ev{a_i}$ for some $a_1,\dots, a_m\in E^\times$. Write $n=2r+1$. There exist $a,b\in \ZZ$ such that $an+bm=r$ since $n$ and $m$ are coprime. Recall that the hyperbolic form $h=1+\ev{-1}$ satisfies $\ev{a_i}h=h$ for any $i$ (see \cite[Lemma 3.7]{Mor12}). Hence $rh=(an_{\epsilon}+b\sum_i \ev{a_i})h$ and $1=n_{\epsilon}-rh=(1-ah)\Tr_{F/E}(1)-bh\Tr_{L/E}(1)$ kills $x$.
\par In characteristic $p>0$, there exist two nilpotent $\alpha$ and $\alpha'$ in $ \GW(E)$ such that $\Tr_{F/E}(1)=n+\alpha$ and $\Tr_{L/E}(1)=m+\alpha'$, according to Lemma \ref{RiousLemma}. Hence for a natural number $s$ large enough, the element $x$ is killed by the coprime numbers $n^{p^s}$ and $m^{p^s}$ so that $x=0$.
\end{proof}

\begin{Lem}\label{FundamentalLemma}
Let $E$ be a field of characteristic $p>0$. Let $F_1,\dots, F_n$ be finite extensions of coprime degrees $d_1,\dots,d_n$. Let $\delta \in \kMW_*(E)$ be an element such that $\res_{F_i/E}(\delta)=0$ for any $i$. Then, $\delta$ is zero. 
\end{Lem}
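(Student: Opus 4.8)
The plan is to reproduce the characteristic $p$ half of the proof of Lemma \ref{CorQuadraticBezout}, replacing the two-term Bézout identity by its $n$-term analogue. First I would fix, for each $i$, an arbitrary factorization of $F_i/E$ and work with the associated Bass-Tate transfer $\Tr_{F_i/E}$ (so that no independence-of-factorization statement is invoked). The projection formula then does the work: since $\Tr_{F_i/E}(\res_{F_i/E}(\delta)\cdot 1)=\delta\cdot\Tr_{F_i/E}(1)$ and the left-hand side is $\Tr_{F_i/E}(\res_{F_i/E}(\delta))=\Tr_{F_i/E}(0)=0$, I obtain the relation $\delta\cdot\Tr_{F_i/E}(1)=0$ in $\kMW_*(E)$ for every $i$.

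The next step is to analyse $\Tr_{F_i/E}(1)\in\GW(E)$. Its rank equals $[F_i:E]=d_i$ (the rank morphism is compatible with transfers, and the Milnor K-theory transfer of $1$ is the degree), so $\alpha_i:=\Tr_{F_i/E}(1)-d_i\cdot\ev{1}$ lies in the kernel of $\rk\colon\GW(E)\to\ZZ$. As $E$ has characteristic $p>0$, Lemma \ref{RiousLemma} shows $\alpha_i$ is nilpotent, say $\alpha_i^{N_i}=0$. From $\delta(d_i+\alpha_i)=0$ I read off $\delta\, d_i=-\delta\,\alpha_i$, and iterating this identity (everything commutes, $\GW(E)$ being a commutative ring acting on $\kMW_*(E)$) gives $\delta\, d_i^{k}=(-1)^k\delta\,\alpha_i^{k}$ for all $k\ge 1$; taking $k=N_i$ yields $\delta\cdot d_i^{N_i}=0$.

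It remains to combine these annihilations. Since $\gcd(d_1,\dots,d_n)=1$, the integers $d_1^{N_1},\dots,d_n^{N_n}$ are again globally coprime, because any prime dividing all of them would divide every $d_i$. A multivariable Bézout identity therefore supplies $c_1,\dots,c_n\in\ZZ$ with $\sum_i c_i\, d_i^{N_i}=1$, and hence $\delta=\bigl(\sum_i c_i\, d_i^{N_i}\bigr)\delta=\sum_i c_i\,(d_i^{N_i}\delta)=0$, as desired.

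The point demanding the most care is avoiding circularity: Theorem \ref{KatoMorel}, the independence of transfers from the chosen generating system, is the very goal these lemmas serve, so I must be sure that the two inputs used above — the projection formula and the equality $\rk\Tr_{F_i/E}(1)=d_i$ — hold for a \emph{single fixed} factorization of each $F_i/E$, without appealing to any independence statement. Both do: the projection formula is intrinsic to the Bass-Tate construction, and the rank identity follows from compatibility of the transfer with the rank morphism together with the unconditional computation of the Milnor K-theory transfer of $1$. This also explains why the hypothesis $p>0$ is indispensable: it is precisely the nilpotence of the fundamental ideal (Lemma \ref{RiousLemma}) that converts $\delta(d_i+\alpha_i)=0$ into annihilation by a power of $d_i$, and there is no comparably clean substitute in characteristic zero once $n>2$.
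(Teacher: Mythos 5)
Your proof is correct and takes essentially the same route as the paper's: the projection formula gives $\delta\cdot\Tr_{F_i/E}(1)=0$, Lemma \ref{RiousLemma} writes $\Tr_{F_i/E}(1)=d_i+\alpha_i$ with $\alpha_i$ nilpotent, and a Bezout combination over the coprime degrees finishes the argument. The only difference is bookkeeping: you first iterate $\delta d_i=-\delta\alpha_i$ to obtain $d_i^{N_i}\delta=0$ and then apply Bezout to the still-coprime integers $d_i^{N_i}$, whereas the paper applies Bezout to the $d_i$ directly and then kills $\delta$ by $1+\alpha'$ with $\alpha'$ nilpotent; your ordering in fact makes the final step cleaner and more transparent than the paper's $(1+\alpha')^{p^n}$ manipulation, which is garbled as written and is really justified by exactly the iteration you spell out.
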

\begin{proof} In zero characteristic, this follows as in Lemma \ref{CorQuadraticBezout}. Assume the characteristic of $E$ to be nonzero.
Let $1\leq i\leq n$, the projection formula proves that $\delta$ is killed by $\Tr_{F_i/E}(1)$. Thus, according to Lemma \ref{RiousLemma}, there exist a nilpotent element $\alpha_i$ in $\GW(E)$ such that $d_i+\alpha_i$ kills $\delta$. Since the degrees $d_i$ are coprime, a Bezout combination yields a nilpotent element $\alpha'$ in $\GW(E)$ such that $1+\alpha'$ kills $\delta$. Finally, we can find a natural number $n$ large enough such that $\delta=1+(\alpha')^{p^n}\cdot \delta=(1+\alpha')^{p^n}\cdot \delta=0$.
\end{proof}

\begin{Lem} \label{R3a}
Let $F/E$ be a field extension and $w$ be a valuation on $F$ which restricts to a non trivial valuation $v$ on $E$ with ramification $e$. We have a commutative square
\begin{center}
$\xymatrix{
\kMW_*(E)
\ar[r]^-{\partial_v}
\ar[d]_{\res_{F/E}}
&
\kMW_{*-1}(\kappa(v),\LLL_v)
\ar[d]^{e_{\epsilon}\cdot \res_{\kappa(w)/\kappa(v)}}
\\
\kMW_*(F)
\ar[r]_-{\partial_w}
&
\kMW_{*-1}(\kappa(w),\LLL_w)
}$
\end{center} 
where $e_{\epsilon}=\sum_{i=1}^{e}\ev{-1}^{i-1}$.

\end{Lem}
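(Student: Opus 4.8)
The plan is to verify commutativity of the square on a generating family of $\kMW_*(E)$ adapted to the valuation $v$, and to concentrate all the arithmetic into the single identity $[\,(\pi')^{e}\,]=e_{\epsilon}[\pi']$ in $\kMW_1$. First I would fix a uniformizer $\pi$ of $\Ocal_v$ and a uniformizer $\pi'$ of $\Ocal_w$; by definition of the ramification index we have $\mathfrak{m}_v\Ocal_w=\mathfrak{m}_w^{e}$, so that $\pi=u\,(\pi')^{e}$ for some unit $u\in\Ocal_w^{\times}$. Relative to $\pi$, the explicit presentation of Milnor–Witt K-theory (see \cite{Mor12}) shows that $\kMW_*(E)$ is generated, as a $\GW(E)$-module, by the symbols $[u_1,\dots,u_n]$ with $u_i\in\Ocal_v^{\times}$ together with the symbols $[\pi,u_2,\dots,u_n]$, and that the residue $\partial_v$ (computed with $\pi$) annihilates the former and sends $[\pi,u_2,\dots,u_n]$ to $[\bar u_2,\dots,\bar u_n]$. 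Since $\res_{F/E}$ is a graded ring morphism fixing symbols and every map in sight is compatible with the $\GW$-action via the projection formula, it suffices to check the square on these two families of symbols.

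The first family is immediate: for $u_i\in\Ocal_v^{\times}\subset\Ocal_w^{\times}$, the element $\res_{F/E}([u_1,\dots,u_n])=[u_1,\dots,u_n]$ is again a symbol in $w$-units, so $\partial_w$ kills it, while $\partial_v$ kills it as well, and both compositions in the square vanish. For the second family I would compute $\res_{F/E}([\pi])$ using $\pi=u\,(\pi')^{e}$ and the product rule $[ab]=[a]+[b]+\eta[a][b]$, which gives
\[
[\pi]=[u]+\ev{u}\,[\,(\pi')^{e}\,]=[u]+\ev{u}\,e_{\epsilon}[\pi'],
\]
the last equality being the key identity $[\,(\pi')^{e}\,]=e_{\epsilon}[\pi']$ obtained by iterating the product rule. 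Consequently $\res_{F/E}([\pi,u_2,\dots,u_n])=[u,u_2,\dots,u_n]+\ev{u}\,e_{\epsilon}\,[\pi',u_2,\dots,u_n]$. Applying $\partial_w$, the first summand is a symbol in $w$-units and dies, while the Leibniz rule for residues yields $\partial_w\big(\ev{u}\,e_{\epsilon}[\pi',u_2,\dots,u_n]\big)=\ev{\bar u}\,e_{\epsilon}\,[\bar u_2,\dots,\bar u_n]$. On the other hand, going down the left edge and across gives $e_{\epsilon}\cdot\res_{\kappa(w)/\kappa(v)}\big([\bar u_2,\dots,\bar u_n]\big)=e_{\epsilon}\,[\bar u_2,\dots,\bar u_n]$.

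The two outcomes differ a priori by the factor $\ev{\bar u}$, and the main obstacle is to see that this factor is exactly absorbed by the identification of twists implicit in $\res_{\kappa(w)/\kappa(v)}\colon \kMW_{*-1}(\kappa(v),\LLL_v)\to \kMW_{*-1}(\kappa(w),\LLL_w)$. Concretely, the trivializations of $\LLL_v$ and $\LLL_w$ are given by $\pi$ and $\pi'$, and comparing them through $\pi=u\,(\pi')^{e}$ forces the restriction on twisted groups to carry precisely the correcting class $\ev{\bar u}$; once this is spelled out, the $\ev{\bar u}$ produced by the symbol expansion cancels against the one coming from the twist comparison, leaving only the factor $e_{\epsilon}$ issued from $[\,(\pi')^{e}\,]=e_{\epsilon}[\pi']$. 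I expect this twist-and-unit bookkeeping to be the delicate point; everything else is a routine reduction to symbols together with the product and Leibniz rules. Combining the two cases then establishes the commutativity of the square.
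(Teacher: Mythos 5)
Your proposal is, in substance, correct — but be aware that the paper itself offers no argument for this lemma: its ``proof'' is the citation to \cite[Lemma 3.19]{Mor12}, and what you have written is essentially a reconstruction of Morel's proof of that cited lemma (reduction to symbols adapted to $v$, the identity $[a^e]=e_\epsilon[a]$, and the unit/twist comparison). Your computations are the right ones: $[\pi]=[u]+\ev{u}e_\epsilon[\pi']$ follows from $[ab]=[a]+\ev{a}[b]$ and $[(\pi')^e]=e_\epsilon[\pi']$, and the leftover factor $\ev{\bar u}$ is indeed exactly what the twisted restriction absorbs. To nail down the point you flagged as delicate: the canonical comparison of twists is induced by $\mathfrak{m}_v\Ocal_w=\mathfrak{m}_w^e$, namely the isomorphism $\mathfrak{m}_v/\mathfrak{m}_v^2\otimes_{\kappa(v)}\kappa(w)\cong(\mathfrak{m}_w/\mathfrak{m}_w^2)^{\otimes e}$ sending $\overline{\pi}$ to $\bar u\cdot(\overline{\pi'})^{\otimes e}$, so the twisted restriction carries $\ev{\bar u^{-1}}=\ev{\bar u}$ as you claim; when $e$ is even the residual ``modulo squares'' ambiguity in identifying $\LLL_w^{\otimes e}$ with $\LLL_w$ is harmless precisely because $e_\epsilon\ev{a}=e_\epsilon$ for even $e$. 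Note also that Morel's statement is untwisted, with the unit $\ev{\bar u}$ appearing explicitly on the right-hand side; your version proves the twisted statement the paper actually asserts, which is a genuine (if small) value-add over the bare citation.

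The one genuine flaw is your justification of the reduction to the two families of symbols. The residue maps are \emph{not} $\GW(E)$-linear — for instance $\partial_v^{\pi}(\ev{\pi})=\partial_v^{\pi}(1+\eta[\pi])=\eta$, whereas any $\GW(E)$-equivariance would force this to be a multiple of $\partial_v^{\pi}(1)=0$ — and the projection formula you invoke is a statement about transfers, not residues, so ``check on $\GW(E)$-module generators'' is not licensed by what you wrote. The reduction is nevertheless valid, for the standard reason underlying Morel's uniqueness clause for $\partial_v^{\pi}$: every element of $\kMW_*(E)$ is a $\ZZ$-linear combination of elements of the form $\eta^m[u_1,\dots,u_n]$ and $\eta^m[\pi,u_2,\dots,u_n]$ with $u_i\in\Ocal_v^{\times}$ (expand each entry $\pi^j u$ via the product rule, use $[\pi][\pi]=[\pi][-1]$ and $\epsilon$-graded commutativity to collect all occurrences of $\pi$ into one slot, and absorb the resulting coefficients $\ev{u}=1+\eta[u]$ into further symbols of the same shape), and all four maps in the square are additive, commute with multiplication by $\eta$, and intertwine multiplication by $\ev{u}$, $u\in\Ocal_v^{\times}$, with multiplication by the class of its image ($\ev{u}$ in $F$, $\ev{\bar u}$ in the residue fields). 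Replace the appeal to the projection formula by this statement and your argument is complete.
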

\begin{proof}
See \cite[Lemma 3.19]{Mor12}.
\end{proof}

\begin{Lem}\label{R1c_faible}
Let $F/E$ be a field extension and $x\in {(\AAA_E^1)}^{(1)}$ a closed point. Then the following diagram
\begin{center}
$\xymatrixcolsep{5pc}\xymatrix{
\kMW_*(E(x),\LLL_{E(x)/k}) \ar[r]^-{\Tr_{x/E}}   \ar[d]_{\oplus_y \res_{F(y)/E(x)}} 
&
\kMW_*(E,\LLL_{E/k}) \ar[d]^{\res_{F/E}} 
\\
\bigoplus_{y\mapsto x} \kMW_*(F(y),\LLL_{F(y)/k}) \ar[r]_-{\sum_{y}e_{y,\epsilon}\Tr_{y/F}} & \kMW_*(F,\LLL_{F/k}) 
}$
\end{center}
is commutative, where  the notation $y\mapsto x$ stands for the closed points of $ {\AAA_F^1}$ lying above $x$, and $e_{y,\epsilon}=\sum_{i=1}^{e_y}\ev{-1}^{i-1}$ is the quadratic form associated to the ramification index of the valuation $v_y$ extending $v_x$ to $F(t)$.
\end{Lem}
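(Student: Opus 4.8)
The plan is to deduce the commutativity from the uniqueness characterization of the transfer maps established above (the Lemma following \ref{BassTateTransfersDefinitionBis}), now applied over the field $F$, together with the ramification formula for residues of Lemma \ref{R3a}. Fix $\beta\in\kMW_*(E(x),\LLL_{E(x)/k})$. Using the homotopy exact sequence of Theorem \ref{ThmHomInvarianceHIbis} over $E$, I would choose a lift $\gamma\in\kMW_{*+1}(E(t),\LLL_{E(t)/k})$ whose total residue is $\beta$ concentrated at the point $x$; thus $\partial_x(\gamma)=\beta$, $\partial_{x'}(\gamma)=0$ for every other closed point $x'$ of $\AAA^1_E$, and by the very definition of the transfer $\partial_\infty(\gamma)=-\Tr_{x/E}(\beta)$.

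I would then transport $\gamma$ to $F(t)$ by restriction, setting $\Gamma=\res_{F(t)/E(t)}(\gamma)\in\kMW_{*+1}(F(t),\LLL_{F(t)/k})$, and compute all of its residues on $\AAA^1_F$ by means of Lemma \ref{R3a} applied to the extension $F(t)/E(t)$. For a closed point $y$ lying over $x$ with ramification index $e_y$, the lemma gives $\partial_y(\Gamma)=e_{y,\epsilon}\,\res_{F(y)/E(x)}(\partial_x(\gamma))=e_{y,\epsilon}\,\res_{F(y)/E(x)}(\beta)$. For a point $y'$ lying over some closed point $x'\neq x$ one gets $\partial_{y'}(\Gamma)=e_{y',\epsilon}\,\res_{F(y')/E(x')}(\partial_{x'}(\gamma))=0$, while for a point $y'$ whose valuation restricts trivially to $E(t)$ the element $\Gamma$ is built out of $v_{y'}$-units and hence has vanishing residue there as well. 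Finally, the point at infinity of $\AAA^1_F$ is the unique point above the point at infinity of $\AAA^1_E$, it is unramified and has residue field $F$, so $\partial_\infty(\Gamma)=\res_{F/E}(\partial_\infty(\gamma))=-\res_{F/E}(\Tr_{x/E}(\beta))$.

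With these residues in hand, the result follows from the uniqueness characterization over $F$: applying the identity $\sum_y\Tr_{y/F}\circ d_y+d_\infty=0$ to $\Gamma$ yields
\[
\sum_{y\mapsto x}\Tr_{y/F}\bigl(e_{y,\epsilon}\,\res_{F(y)/E(x)}(\beta)\bigr)-\res_{F/E}(\Tr_{x/E}(\beta))=0.
\]
Since $\Tr_{y/F}$ is $\GW(F)$-linear by the projection formula, each $e_{y,\epsilon}$ (viewed in $\GW(F)$) pulls out of $\Tr_{y/F}$, so the sum becomes $\sum_{y\mapsto x}e_{y,\epsilon}\,\Tr_{y/F}(\res_{F(y)/E(x)}(\beta))$, which is exactly the lower-left composite of the square evaluated at $\beta$; this is the desired identity.

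I expect the main obstacle to be the careful bookkeeping of places of $\AAA^1_F$: one must check that every closed point not lying over $x$ contributes a vanishing residue, and in particular handle those points whose valuation restricts trivially to $E(t)$, to which Lemma \ref{R3a} does not directly apply. One must also confirm that the place at infinity is unramified with residue field $F$ so that its contribution is precisely $\res_{F/E}\circ\partial_\infty$, and thread the twisting line bundles through the restriction $\res_{F(t)/E(t)}$ consistently. Once these verifications are made, the uniqueness characterization delivers the commutativity at once.
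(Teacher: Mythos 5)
Your proof is correct and follows essentially the same route as the paper: both arguments rest on Lemma \ref{R3a} applied fibrewise over $x$ (and at infinity) together with the definition of the Bass--Tate transfers via the homotopy exact sequence of Theorem \ref{ThmHomInvarianceHIbis}. Your element-wise version, using the uniqueness characterization $\sum_y(\Tr_{y/F}\circ d_y)+d_\infty=0$ and explicitly disposing of the points of $\AAA^1_F$ whose valuation restricts trivially to $E(t)$, is simply a careful unpacking of the paper's diagrammatic argument with the splittings $\rho_x$.
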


\begin{proof}
According to Lemma \ref{R3a}, the following diagram
\begin{center}
$\xymatrix{
\kMW_*(E(t)) \ar[r]^{\partial_x}  \ar[d]_{\res_{F(t)/E(t)}}
& \kMW_{*-1}(E(x),\LLL_x) \ar[d]^{\oplus_y e_{y,\epsilon} \res_{F(y)/E(x)}} 
\\
\kMW_*(F(t)) \ar[r]_-{\oplus_y \partial_y} & \bigoplus_{y\mapsto x} 
\kMW_{*-1}(F(y),\LLL_y)
}$
\end{center} 
is commutative hence so does the diagram
\begin{center}
$\xymatrix{
\kMW{*}(E(t))   \ar[d]_{\res_{F(t)/E(t)}}
& \kMW{*}(E(x),\LLL_x) \ar[d]^{\oplus_y e_{y,\epsilon} \res_{F(y)/E(x)}} \ar[l]_{\rho_x} \\
\kMW{*}(F(t))  &
 \bigoplus_{y\mapsto x} \kMW{*}(F(y),\LLL_y)\ar[l]^-{\oplus_y \rho_y}
}$
\end{center}where $\rho_x$ is the canonical splitting (see Theorem \ref{ThmHomInvarianceHIbis}). Then, we conclude according to the definition of the Bass-Tate transfer maps \ref{BassTateTransfersDefinitionBis}.
\end{proof}

\begin{Rem}
The multiplicities $e_y$ appearing in the previous lemma are equal to 
\begin{center}
${[E(x):E]_i/[F(y):F]_i}$
\end{center} 
where $[E(x):E]_i$ is the inseparable degree.
\end{Rem}

\begin{The} [Strong R1c] \label{R1c_fort_KMW}
Let $E$ be a field, $F/E$ a finite field extension and $L/E$ an arbitrary field extension. Write $F=E(x_1,\dots, x_r)$ with $x_i\in F$, $R=F\otimes_E L$ and $\psi_{\mathfrak{p}}:R\to R/\mathfrak{p}$ the natural projection defined for any $\mathfrak{p}\in \Spec(R)$. Then the diagram
\begin{center}
$\xymatrixcolsep{9pc}\xymatrix{
\kMW_*(F,\LLL_{F/k}) \ar[r]^{\Tr_{x_1,\dots, x_r/E}} \ar[d]_{\oplus_{\mathfrak{p}} \res_{(R/\mathfrak{p})/F}}
& \kMW_*(E,\LLL_{E/k})  \ar[d]^{\res_{L/E}} \\
\bigoplus_{\mathfrak{p}\in \Spec(R)}\kMW_*(R/\mathfrak{p},\LLL_{(R/\mathfrak{p})/k}) 
\ar[r]_-{
\sum_{\mathfrak{p}}  (m_p)_{\epsilon} \Tr_{
\psi_{\mathfrak{p}}(a_1),
\dots,
\psi_{\mathfrak{p}}(a_r)/L}}
& \kMW_*(L,\LLL_{L/k})
}$
\end{center}
is commutative where $m_p$ the length of the localized ring $R_{(p)}$.
\end{The}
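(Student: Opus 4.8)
The plan is to argue by induction on the number $r$ of generators, taking the weak form of R1c (Lemma~\ref{R1c_faible}) as the base case and the defining factorization of the iterated transfer as the inductive engine; crucially, only the definition of $\Tr_{x_1,\dots,x_r/E}$ as a composition is used, never its (still unproven) independence from the generating system. When $r=1$ the extension $F=E(x_1)$ is simple, the primes of $R=E(x_1)\otimes_E L$ are exactly the closed points $y$ of $\AAA^1_L$ lying over $x_1\in\AAA^1_E$, the residue fields $R/\mathfrak p$ are the $L(y)$, and the length $m_{\mathfrak p}$ of $R_{(\mathfrak p)}$ is the ramification index $e_y$; the square to be proved is then literally the one of Lemma~\ref{R1c_faible} (with its $F$ renamed $L$), twists included.

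For the inductive step I would split off the first generator. Put $E'=E(x_1)$, so that $F=E'(x_2,\dots,x_r)$ and, by definition,
\[
\Tr_{x_1,\dots,x_r/E}=\Tr_{x_1/E}\circ\Tr_{x_2,\dots,x_r/E'}.
\]
Let $S=E'\otimes_E L$, an Artinian $L$-algebra with primes $\mathfrak q_j$, residue fields $L_j:=S/\mathfrak q_j=L(\bar x_1^{(j)})$, and multiplicities $n_j=\operatorname{length}(S_{(\mathfrak q_j)})$. The base case applied to $\Tr_{x_1/E}$ with base change $L/E$ rewrites $\res_{L/E}\circ\Tr_{x_1/E}$ as $\bigl(\sum_j (n_j)_\epsilon\,\Tr_{\bar x_1^{(j)}/L}\bigr)\circ\bigl(\oplus_j\res_{L_j/E'}\bigr)$, while the induction hypothesis, applied to the $(r-1)$-generator extension $F/E'$ with each base change $L_j/E'$, rewrites every $\res_{L_j/E'}\circ\Tr_{x_2,\dots,x_r/E'}$ as a sum of weighted transfers indexed by the primes $\mathfrak p'$ of $R_j:=F\otimes_{E'}L_j$. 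Substituting the second identity into the first expresses $\res_{L/E}\circ\Tr_{x_1,\dots,x_r/E}$ as a double sum over the pairs $(j,\mathfrak p')$.

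It remains to collapse this double sum into the single sum over $\Spec(R)$ of the statement, which is where the genuine content sits. Via the associativity isomorphism $R=F\otimes_E L\cong F\otimes_{E'}S$ and the decomposition of the Artinian ring $S$ into local factors, one identifies $\Spec(R)\cong\coprod_j\Spec(R_j)$, so a prime $\mathfrak p$ of $R$ is the same datum as a pair $(j,\mathfrak p')$, the residue fields and the generators $\psi_{\mathfrak p}(x_i)$ matching on the nose. The key algebraic point is the multiplicativity of length along the tower: since $F$ is free over $E'$, tensoring the $\mathfrak q_j$-adic filtration of $S_{(\mathfrak q_j)}$ with $F$ and localizing gives $m_{\mathfrak p}=n_j\cdot m_{\mathfrak p'}$. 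Combined with $(nm)_\epsilon=n_\epsilon m_\epsilon$ and with the projection formula—which lets the scalar $(m_{\mathfrak p'})_\epsilon$, a restriction of the universal class from the base, pass through $\Tr_{\bar x_1^{(j)}/L}$—the two weights merge into $(m_{\mathfrak p})_\epsilon$; simultaneously the definitional identity
\[
\Tr_{\psi_{\mathfrak p}(x_1),\dots,\psi_{\mathfrak p}(x_r)/L}=\Tr_{\psi_{\mathfrak p}(x_1)/L}\circ\Tr_{\psi_{\mathfrak p}(x_2),\dots,\psi_{\mathfrak p}(x_r)/L_j}
\]
(valid since $L_j=L(\psi_{\mathfrak p}(x_1))$) fuses the two transfer families into the single iterated transfer over $L$. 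The twists compose by transitivity of the cotangent complexes exactly as in the base case, so no new orientation issue arises. The main obstacle is therefore not homotopy-theoretic but bookkeeping: establishing the bijection $\Spec(R)\cong\coprod_j\Spec(R_j)$ together with the length identity $m_{\mathfrak p}=n_j\,m_{\mathfrak p'}$ across the tower $E\subset E'\subset F$ under base change by $L$, which I expect to settle by elementary commutative algebra.
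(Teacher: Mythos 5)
Your proposal is correct and follows essentially the same route as the paper: induction on $r$ with Lemma~\ref{R1c_faible} as the base case, splitting off the first generator, decomposing the Artinian algebra $E(x_1)\otimes_E L$ into local factors, and stacking the two commutative squares via the length multiplicativity $m_{\mathfrak p}=n_j\,m_{\mathfrak p'}$ and the formula $(nm)_\epsilon=n_\epsilon m_\epsilon$. In fact you make explicit two points the paper's proof leaves implicit (the filtration argument for multiplicativity of lengths, and the projection formula needed to move the scalar $(m_{\mathfrak p'})_\epsilon$ through the transfer), so your write-up is a faithful, slightly more detailed version of the same argument.
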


\begin{proof} We prove the theorem by induction. For $r=1$, this is Lemma \ref{R1c_faible}. Write $E(x_1)\otimes_E L=\prod_j R_j$ for some Artin local $L$-algebras $R_j$, and decompose the finite dimensional $L$-algebra $F\otimes_{E(x_1)} R_j$ as $F\otimes_{E(x_1)} R_j=\prod_{i} R_{ij}$ for some local $L$-algebras $R_{ij}$. We have $F\otimes_E L\simeq \prod_{i,j}R_{ij}$. Denote by $L_j$ (resp. $L_{ij}$) the residue fields of the Artin local $L$-algebras $R_j$ (resp. $R_{ij}$), and $m_j$ (resp. $m_{ij}$) for their geometric multiplicity. We can conclude as the following diagram commutes

\begin{center}
\scalebox{0.88}{$\xymatrixcolsep{10.5pc}\xymatrix{
\M(F,\LLL_{F/k}) \ar[r]^{\tr_{x_1,\dots, x_r/E}} \ar[d]_{\oplus_{ij} \res_{L_{ij}/F}}
&
\M(E(x_1),\LLL_{E(x_1)/k}) 
\ar[d]^{\oplus \res_{L_j/E(x_1)}}
\ar[r]^{\tr_{x_1/E}}
& \M(E,\LLL_{E/k})  
\ar[d]^{\res_{L/E}} 
\\
\bigoplus_{ij}\M(L_{ij},\LLL_{L_{ij}/k}) 
\ar[r]_-{
\sum_{ij}  (m_{ij}m_j^{-1})_{\epsilon} \tr_{
\psi_{ij}(x_1),
\dots,
\psi_{ij}(x_r)/L_j}}
&
\bigoplus_j \M(L_j,\LLL_{L_j/k}) 
\ar[r]_{\sum_j (m_j)_{\epsilon}\tr_{\psi_j(x_1)/L}}
& \M(L,\LLL_{L/k})
}$
}

\end{center}
since both squares are commutative by the inductive hypothesis and the multiplicity formula $(mn)_{\epsilon}=m_{\epsilon}n_{\epsilon}$ for any natural numbers $m,n$.
\end{proof}

\begin{The}
Assume that Theorem \ref{KatoMorel} holds for all fields $p$-primary $E$ for any prime number $p$.
Then the theorem holds for any field $E$.
\end{The}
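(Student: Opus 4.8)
The plan is to fix a finite extension $F/E$ together with two generating systems $F=E(x_1,\dots,x_r)=E(y_1,\dots,y_s)$ and to prove that the associated iterated transfers agree, which suffices since independence of the generating system is a statement about pairs. For $\beta\in\kMW_*(F,\LLL_{F/k})$ put
\[
\delta=\Tr_{x_1,\dots,x_r/E}(\beta)-\Tr_{y_1,\dots,y_s/E}(\beta)\in\kMW_*(E,\LLL_{E/k}),
\]
so the goal is $\delta=0$. First I would fix a prime $p$ and apply the strong R1c formula (Theorem \ref{R1c_fort_KMW}) to $F/E$ and to the \emph{arbitrary} extension $L=E_{\ev{p}}$. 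Writing $R=F\otimes_E E_{\ev{p}}=\prod_{\mathfrak p}R_{\mathfrak p}$ as a product of Artin local algebras with residue fields $R/\mathfrak p$ finite over $E_{\ev{p}}$, the theorem identifies $\res_{E_{\ev{p}}/E}\circ\Tr_{x_1,\dots,x_r/E}$ with $\bigl(\sum_{\mathfrak p}(m_{\mathfrak p})_\epsilon\,\Tr_{\psi_{\mathfrak p}(x_1),\dots,\psi_{\mathfrak p}(x_r)/E_{\ev{p}}}\bigr)\circ\bigl(\oplus_{\mathfrak p}\res_{(R/\mathfrak p)/F}\bigr)$, and the same with the $y_j$.

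The key point is that everything in these two expressions except the inner transfer is independent of the chosen generators: the index set $\Spec(R)$, the multiplicities $m_{\mathfrak p}$ and the restriction maps $\res_{(R/\mathfrak p)/F}$ depend only on $R=F\otimes_E E_{\ev{p}}$. The only discrepancy is that $\Tr_{\psi_{\mathfrak p}(x_\bullet)/E_{\ev{p}}}$ uses the images of the $x_i$ rather than of the $y_j$. Since the $\psi_{\mathfrak p}(x_i)$ (resp. $\psi_{\mathfrak p}(y_j)$) generate the residue field $R/\mathfrak p$ over $E_{\ev{p}}$, and since $E_{\ev{p}}$ is $p$-primary, the standing hypothesis (Theorem \ref{KatoMorel} for $p$-primary base fields) gives $\Tr_{\psi_{\mathfrak p}(x_\bullet)/E_{\ev{p}}}=\Tr_{\psi_{\mathfrak p}(y_\bullet)/E_{\ev{p}}}$ for every $\mathfrak p$. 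Hence the two formulas agree term by term, so $\res_{E_{\ev{p}}/E}(\delta)=0$.

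Next I would descend to a finite extension. As $E_{\ev{p}}$ is the filtered union of its finite subextensions of $E$, all of degree prime to $p$, and Milnor–Witt K-theory commutes with filtered colimits of fields, the vanishing $\res_{E_{\ev{p}}/E}(\delta)=0$ is already witnessed at a finite stage: there is a finite extension $E_p/E$ with $p\nmid[E_p:E]$ and $\res_{E_p/E}(\delta)=0$. To feed this into Lemma \ref{FundamentalLemma} I need finitely many such extensions of coprime degrees, which I obtain by taking $E_2$ (of odd degree) together with $E_q$ for each prime $q$ dividing $[E_2:E]$. This is a finite family and, for every prime $\ell$, at least one member has degree prime to $\ell$: namely $E_\ell$ when $\ell\mid[E_2:E]$, and $E_2$ otherwise (its prime divisors being exactly those $q$). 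Thus their degrees are globally coprime and each kills $\delta$, so Lemma \ref{FundamentalLemma}, in its evident twisted form (its proof uses only the $\GW(E)$-module structure via the projection formula), forces $\delta=0$. This establishes independence for $F/E$, hence Theorem \ref{KatoMorel} for arbitrary $E$.

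I expect the main obstacle to lie in the first two steps rather than in the final bookkeeping: one must check that the strong R1c formula is genuinely applicable to the infinite extension $E_{\ev{p}}/E$, and then isolate precisely the data that is independent of the generators so that the comparison collapses exactly onto the $p$-primary instance of the theorem applied to each residue field $R/\mathfrak p$. Once this identification is clean, the colimit descent and the coprimality argument feeding Lemma \ref{FundamentalLemma} are routine.
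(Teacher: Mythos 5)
Your proposal is correct and follows essentially the same route as the paper's own proof: reduce to $\res_{E_{\ev{p}}/E}(\delta)=0$ by applying Theorem \ref{R1c_fort_KMW} with $L=E_{\ev{p}}$ and invoking the $p$-primary case on each residue field of $F\otimes_E E_{\ev{p}}$, then descend to a finite prime-to-$p$ extension and conclude with Lemma \ref{FundamentalLemma}. The only differences are that you make explicit two points the paper leaves implicit — the filtered-colimit descent to a finite stage and the choice of a finite family of extensions with globally coprime degrees — which is a clarification of the same argument rather than a new approach.
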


\begin{proof}Consider two decompositions 
\begin{center}
$E\subset E(x_1)\subset E(x_1,x_2)\subset \dots \subset E(x_1,\dots , x_r)=F.$

\end{center}
and
\begin{center}
$E\subset E(y_1)\subset E(y_1,y_2)\subset \dots \subset E(y_1,\dots , y_s)=F.$

\end{center}
of $F$. Let $\alpha\in \M(F)$ and denote by $\delta$ the element $\Tr_{x_1,\dots,x_r/E}(\alpha)-\Tr_{y_1,\dots y_s/E}(\alpha)$. 
Fix $p$ a prime number and let $L$ be a maximal prime to $p$ extension of $E$ ($L$ has no nontrivial finite extension of degree prime to $p$). With the notation of Theorem \ref{R1c_fort_KMW}, the map $\sum_{\mathfrak{p}}  (m_p)_{\epsilon} \Tr_{
\psi_{\mathfrak{p}}(x_1),
\dots,
\psi_{\mathfrak{p}}(x_r)/L}$ does not depend on the choice of $x_i$ according to the assumption. Hence $\res_{L/E}(\delta)=0$ and we can find a finite extension $L_p/E$ of degree prime to $p$ such that $\res_{L_p/E}(\delta)=0$. Since this is true for all prime number $p$, we see that the assumption of Lemma \ref{FundamentalLemma} are satisfied. Thus $\delta=0$ and the theorem is proved. 
\end{proof}

\begin{Rem} More generally, one may replace $\kMW_*$ by any contracted homotopy sheaf $M_{-1}$ and apply the proof verbatim. In particular, we have the following simplification of Morel's conjecture \cite[Conjecture 4.1.13]{Fel20}.
\end{Rem}

\begin{The} \label{ReductionConjectureMoinsUn}
In order to prove that a contracted homotopy sheaf $M_{-1}$ has functorial transfers, it suffices to consider the case of $p$-primary fields (where $p$ is a prime number).
\end{The}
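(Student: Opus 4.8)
The plan is to verify that the reduction carried out above for $\kMW_*$ — the chain \ref{R3a} $\Rightarrow$ \ref{R1c_faible} $\Rightarrow$ \ref{R1c_fort_KMW}, together with the combinatorial lemmas \ref{RiousLemma} and \ref{FundamentalLemma} and the final descent — uses nothing about $\kMW_*$ beyond structural features shared by every contracted homotopy sheaf $\M$, and then to re-run the concluding argument verbatim with $\kMW_*$ replaced by $\M$. The structural inputs are precisely: the $\kMW_0=\GW$-module structure on $\M$ coming from its canonical $\Gm$-structure; the residue maps $\partial_v$ together with the homotopy-invariance exact sequence of Theorem \ref{ThmHomInvarianceHIbis}; the Bass--Tate transfers defined exactly as in \ref{BassTateTransfersDefinitionBis}; the residue--restriction compatibility of Lemma \ref{R3a}; and the projection formula. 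All of these belong to the general formalism of $\M$ (cf. \cite{Fel20}), so I would first isolate them as the only non-formal ingredients.

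Granting these, I would record that Lemma \ref{R3a}, and hence Lemma \ref{R1c_faible} and the Strong R1c statement \ref{R1c_fort_KMW}, hold with $\kMW_*$ replaced by $\M$: their proofs only manipulate residues, restrictions, the canonical splitting $\rho_x$, and the multiplicativity $(mn)_\epsilon=m_\epsilon n_\epsilon$, none of which refers to $\kMW$ beyond the module structure. Next, Lemma \ref{RiousLemma} is a statement purely about $\GW(E)$ and is therefore unchanged. Consequently Lemma \ref{FundamentalLemma} carries over: if $\res_{F_i/E}(\delta)=0$ for an element $\delta\in\M(E)$ and finite extensions $F_i/E$ of coprime degrees $d_i$, then the projection formula shows that $\delta$ is killed by each $\Tr_{F_i/E}(1)=d_i+\alpha_i$ with $\alpha_i\in\GW(E)$ nilpotent, and a Bézout combination followed by raising to a large $p$-power forces $\delta=0$.

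With these tools in place I would reproduce the final descent. Given two factorizations of $F=E(x_1,\dots,x_r)=E(y_1,\dots,y_s)$ and $\alpha\in\M(F)$, set $\delta=\Tr_{x_1,\dots,x_r/E}(\alpha)-\Tr_{y_1,\dots,y_s/E}(\alpha)$. For each prime $p$, let $L$ be a maximal prime-to-$p$ extension of $E$, so $L$ is $p$-primary and, by Lemma \ref{Lem7.3.7}, so are the residue fields $R/\mathfrak{p}$ of $R=F\otimes_E L$. The $\M$-version of Theorem \ref{R1c_fort_KMW} expresses $\res_{L/E}\circ\Tr_{x_\bullet/E}$ and $\res_{L/E}\circ\Tr_{y_\bullet/E}$ through the bottom map $\sum_{\mathfrak{p}}(m_p)_\epsilon\,\Tr_{\psi_{\mathfrak{p}}(\,\cdot\,)/L}$; since the assumed functoriality over the $p$-primary fields $R/\mathfrak{p}$ makes these transfers to $L$ independent of the chosen generators, the two bottom maps coincide and $\res_{L/E}(\delta)=0$. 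A colimit argument then yields a finite prime-to-$p$ extension $L_p/E$ with $\res_{L_p/E}(\delta)=0$, and applying the $\M$-version of Lemma \ref{FundamentalLemma} across all primes $p$ gives $\delta=0$.

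The main obstacle is not combinatorial — that part is entirely absorbed by Lemmas \ref{RiousLemma} and \ref{FundamentalLemma}, neither of which sees $\M$ — but lies in confirming that the general theory of contracted homotopy sheaves genuinely supplies the homotopy-invariance exact sequence (needed to define the Bass--Tate transfers on $\M$) and the projection formula. Once these are invoked from the formalism of $\M$, every remaining step is a formal consequence of the reductions already established, exactly as in the proof of Theorem \ref{KatoMorel}, and the statement follows.
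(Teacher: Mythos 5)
Your proposal is correct and follows essentially the same route as the paper: the paper's own justification of Theorem \ref{ReductionConjectureMoinsUn} is simply the preceding remark that the reduction chain for $\kMW_*$ (Lemmas \ref{RiousLemma}, \ref{FundamentalLemma}, \ref{R3a}, \ref{R1c_faible}, Theorem \ref{R1c_fort_KMW} and the final descent to $p$-primary fields) applies verbatim to any contracted homotopy sheaf $\M$. Your write-up is in fact more explicit than the paper's, since you isolate the non-formal inputs (the homotopy-invariance exact sequence defining the Bass--Tate transfers on $\M$, the $\GW$-module structure, and the projection formula) that the general formalism of contracted homotopy sheaves must supply.
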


\begin{Pro}[Bass-Tate-Morel Lemma] \label{BassTateMorelLemma} Let $F(x)$ be a monogenous extension of $F$. Then $\kMW_*(F(x))$ is generated as a left $\kMW_*(F)$-module by elements of the form
\begin{center}
$\eeta^m\cdot [p_1(x),p_2(x),\dots, p_n(x)]$
\end{center}
where the $p_i$ are monic irreducible polynomials of $F[t]$ satisfying 
\begin{center}
${\deg(p_1)< \deg(p_2) < \dots < \deg(p_n)\leq d-1}$
\end{center} where $d$ is the degree of the extension $F(x)/F$.
\end{Pro}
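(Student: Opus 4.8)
The plan is to mimic the classical Bass--Tate computation for Milnor K-theory, while keeping track of the $\eeta$- and $\ev{\cdot}$-twists peculiar to the Milnor--Witt setting. Write $F(x)=F[t]/(f)$ with $f$ the minimal polynomial of $x$, of degree $d$. First I would shrink the generating set. Every element of $F(x)^\times$ is represented by a polynomial of degree $<d$, which factors in $F[t]$ as $c\cdot p_1^{e_1}\cdots p_s^{e_s}$ with $c\in F^\times$ and the $p_i$ monic irreducible of degree $<d$ (none can be divisible by $f$, so each $p_i(x)$ is a unit); hence $F(x)^\times$ is generated by $F^\times$ together with the elements $p(x)$, $p$ monic irreducible of degree $<d$. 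Since $\kMW_*(F(x))$ is generated as a ring by $\eeta$ and the symbols $[u]$, $u\in F(x)^\times$, expanding each $[u]$ by means of $[uv]=[u]+[v]+\eeta[u][v]$ and $[u^{-1}]=-\ev{u}[u]$ shows that $\kMW_*(F(x))$ is generated as a left $\kMW_*(F)$-module by the monomials $\eeta^m[p_1(x)]\cdots[p_n(x)]$, each $p_i$ monic irreducible of degree $<d$ (the symbols $[c]$, $c\in F^\times$, and the powers of $\eeta$ being absorbed into the coefficient ring $\kMW_*(F)$). It remains to arrange that the $p_i$ are pairwise distinct with strictly increasing degrees.

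The heart of the argument is a degree-reduction procedure, run by induction on the multiset of degrees $\{\deg p_1,\dots,\deg p_n\}$ of the factors, ordered so that removing a factor of maximal degree is a strict decrease. A repeated factor is eliminated using the relation $[p(x)][-p(x)]=0$: expanding $[-p(x)]=[-1]+[p(x)]+\eeta[-1][p(x)]$ and invoking $\eeta\,h=0$ (with $h=1+\ev{-1}$) yields the analogue $[p(x)]^2=[-1]\,[p(x)]$ of Milnor's identity $\{p,p\}=\{-1,p\}$, so a square is traded for a single symbol with a base coefficient $[-1]\in\kMW_*(F)$. For two distinct monic irreducibles $p,q$ of the same degree $e<d$, I would use the identity $\tfrac{q}{p}+\tfrac{p-q}{p}=1$ in $F(x)$ (legitimate since $0\neq p-q$ has degree $<e$) together with the Steinberg relation $\bigl[\tfrac{q(x)}{p(x)}\bigr]\bigl[\tfrac{(p-q)(x)}{p(x)}\bigr]=0$. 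Expanding both factors with twisted additivity and the inverse formula and solving for the top product $[q(x)][p(x)]$ expresses it as a $\kMW_*(F)$-combination of $[p(x)][(p-q)(x)]$, $[q(x)][(p-q)(x)]$, squares (reduced as above), and terms carrying a base symbol $[\pm1]$. Since $p-q$ factors into monic irreducibles of degree $<e$, each surviving monomial has strictly fewer factors of degree $e$, so the induction measure drops.

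After finitely many such steps every generator becomes a $\kMW_*(F)$-linear combination of monomials $\eeta^m[p_1(x)]\cdots[p_n(x)]$ whose factors have pairwise distinct degrees. A final application of graded commutativity $[u][v]=-\ev{-1}[v][u]$ then reorders the factors by increasing degree, at the cost of $\ev{-1}$- and $\eeta$-twists that remain inside $\kMW_*(F)$, giving the asserted form with $\deg p_1<\dots<\deg p_n\le d-1$.

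The main obstacle I anticipate is the bookkeeping in the reduction step. In Milnor K-theory the Steinberg expansion is linear and the cross terms either vanish or visibly drop in degree, whereas here each use of $[uv]=[u]+[v]+\eeta[u][v]$ and of $[u^{-1}]=-\ev{u}[u]$ produces $\eeta$- and $\ev{\cdot}$-twisted corrections, and some coefficients that appear, such as $\ev{p(x)}$, a priori lie in $\GW(F(x))$ rather than in the base $\GW(F)$. The delicate point will be to check, using relations such as $\eeta\,h=0$ and $(1-\ev{-p(x)})[-1][p(x)]=0$, that after collecting terms every coefficient can be pushed into $\kMW_*(F)$ and every monomial of equal-or-higher complexity cancels, so that the induction genuinely terminates.
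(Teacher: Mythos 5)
Your proposal follows essentially the same route as the paper, whose own proof is a one-line deferral to Morel (\cite[Lemma 3.25.1]{Mor12}, cited as ``straightforward computations''), namely the $\eta$-twisted Bass--Tate induction you reconstruct. The delicate point you flag does resolve positively: in the key reduction step the problematic monomials carrying two top-degree symbols plus a lower one, such as $\eta[q(x)][p(x)][(p-q)(x)]$, collect a total coefficient proportional to $\eta(1+\ev{-1})=\eta h=0$ and hence cancel, while the coefficients $\ev{p(x)}$ are pushed into the base via $\ev{u}[u]=\ev{-1}[u]$, so every surviving term has strictly fewer factors of maximal degree and your induction terminates.
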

\begin{proof}
Straightforward computations (see also \cite[Lemma 3.25.1]{Mor12}).
\end{proof}

\begin{Cor} \label{CorBassTateMorel}
Let $F/E$ be a finite field extension and assume one of the following conditions holds:
\begin{itemize}
\item $F/E$ is a quadratic extension,
\item $F/E$ is a prime degree $p$ extension and $E$ has no nontrivial extension of degree prime to $p$.
\end{itemize}

Then $\kMW_*(F)$ is generated as a left $\kMW_*(E)$-module by $F^\times$. 
\end{Cor}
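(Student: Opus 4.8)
The plan is to deduce the statement directly from the Bass-Tate-Morel Lemma (Proposition \ref{BassTateMorelLemma}) by showing that, under either hypothesis, the only monic irreducible polynomials that can occur in the generating symbols are linear. Write $F=E(x)$ and set $d=[F:E]$. Applying Proposition \ref{BassTateMorelLemma} with base field $E$, the module $\kMW_*(F)$ is generated over $\kMW_*(E)$ by the elements $\eeta^m\cdot[p_1(x),\dots,p_n(x)]$ with each $p_i\in E[t]$ monic irreducible and $\deg p_1<\dots<\deg p_n\leq d-1$. Since $\eeta$ is defined over the base field, $\eeta^m=\res_{F/E}(\eeta^m)$ lies in the image of $\kMW_*(E)$; the $\eeta$-powers are therefore absorbed into the module structure, and it suffices to control the symbols $[p_1(x),\dots,p_n(x)]$.

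First I would dispose of the quadratic case, which is immediate: here $d=2$, so the bound $\deg p_n\leq d-1=1$ forces every $p_i$ to be linear. As the degrees must be strictly increasing while all equal to $1$, this leaves only $n\leq 1$, so each generator is either the unit $1$ (for $n=0$) or $[p_1(x)]$ with $p_1(t)=t-a$ and $p_1(x)=x-a\in F^\times$ (for $n=1$).

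The prime-degree case rests on the same mechanism, and the only genuine content of the proof lies in ruling out intermediate degrees. A monic irreducible polynomial over $E$ of degree $j$ with $2\leq j\leq p-1$ would define an extension $E[t]/(p_i)$ of degree $j$; since $0<j<p$ and $p$ is prime, $j$ is prime to $p$, contradicting the assumption that $E$ has no nontrivial finite extension of degree prime to $p$. Hence the only monic irreducible polynomials over $E$ of degree $\leq p-1$ are the linear ones, and exactly as above the strict inequality $\deg p_1<\dots<\deg p_n$ forces $n\leq 1$. The surviving generators are again $1$ and $[x-a]$ with $x-a\in F^\times$.

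In both cases every Bass-Tate-Morel generator has the form $\res_{F/E}(\eeta^m)\cdot u'$, where $u'$ is either the unit $1\in\kMW_0(F)$ (the image of the unit of $\kMW_*(E)$) or a symbol $[u]$ with $u\in F^\times$. It follows that $\kMW_*(F)$ is generated as a left $\kMW_*(E)$-module by $F^\times$ together with the unit coming from restriction, as claimed. I expect the main (and essentially the only) obstacle to be the arithmetic step isolating the intermediate-degree polynomials, which is precisely where the hypothesis on $E$ is used; the remainder is a direct reading of Proposition \ref{BassTateMorelLemma}.
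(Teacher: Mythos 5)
Your proof is correct and follows essentially the same route as the paper: the paper's own (one-line) proof is exactly the observation that in both cases $F/E$ is simple and the only monic irreducible polynomials in $E[t]$ of degree strictly less than $[F:E]$ are linear, after which Proposition \ref{BassTateMorelLemma} gives the conclusion. The only step you leave implicit is why one may write $F=E(x)$ at all, i.e.\ that an extension of prime degree is automatically simple (any $x\in F\setminus E$ generates it), but the paper asserts this without proof as well.
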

\begin{proof}

In both cases, the extension $F/E$ is simple and the only monic irreducible polynomial in $E[t]$ of degree strictly smaller than $[F:E]$ are the polynomials of degree 1. We conclude by Proposition \ref{BassTateMorelLemma}.
\end{proof}

{\em In the following, we fix a prime number $p$ and $E$ a $p$-primary field.}

\begin{Pro} \label{Prop7.3.8}
Let $F=E(x)$ be a monogenous extension of $E$ of degree $p$. Then the transfers $\Tr_{x/E}:\kMW_*(E(x),\LLL_{E(x)/k})\to \kMW_*(E,\LLL_{E/k})$ do not depend on the choice of $x$.
\end{Pro}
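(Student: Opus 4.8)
The plan is to use the projection formula to reduce the statement to a computation on a small set of module generators, and then to carry that computation out by hand through the residue-at-infinity definition. First I would apply Corollary \ref{CorBassTateMorel}: since $E$ is $p$-primary and $[F:E]=p$ is prime, the only monic irreducible polynomials over $E$ of degree $<p$ are the linear ones, so $\kMW_*(F)$ is generated as a left $\kMW_*(E)$-module by $F^\times$; concretely, every element is a $\kMW_*(E)$-combination of $1$ and of symbols $[u]$ with $u\in F^\times$. Because any Bass--Tate transfer satisfies the projection formula $\Tr_{x/E}(\res_{F/E}(\alpha)\cdot\beta)=\alpha\cdot\Tr_{x/E}(\beta)$, two transfers $\Tr_{x/E}$ and $\Tr_{x'/E}$ that agree on these generators automatically agree on all of $\kMW_*(F)$ (the chosen decomposition of a given $\beta$ does not refer to the generator). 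Thus it suffices to prove that $\Tr_{x/E}(1)$ and $\Tr_{x/E}([u])$, for every $u\in F^\times$, are independent of $x$.

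Next I would exploit normality. By Lemma \ref{Lem7.3.7}(2) an extension of degree $p$ of the $p$-primary field $E$ is normal, so either $F/E$ is cyclic Galois of degree $p$, or $p=\operatorname{char}(k)$ and $F/E$ is purely inseparable of degree $p$. In either case the minimal polynomial of any generator splits over $F$, which is what lets the residues attached to two different generators be compared after base change, via Lemma \ref{R1c_faible}. I would treat the separable and inseparable cases in parallel, the generating sets and the shape of the norm being the only real differences.

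The computation rests on Definition \ref{BassTateTransfersDefinitionBis} and the homotopy sequence of Theorem \ref{ThmHomInvarianceHIbis}. For the generator $1$, the transfer $\Tr_{x/E}(1)$ is computed by the trace form: in characteristic zero Lemma \ref{LamVII2.2} gives $p_\epsilon$ for $p$ odd (manifestly intrinsic) and $\ev{1}+\ev{-N_{F/E}(x)}$ for $p=2$, while in positive characteristic Lemma \ref{RiousLemma} shows that $\Tr_{x/E}(1)$ differs from its rank $p$ by a nilpotent element. For a symbol $[u]$ with $u=g(x)\in F^\times$, I would lift $u$ to the two-fold symbol $[p_x(t),g(t)]$ in $\kMW_*(E(t))$, correct it so that its only finite residue sits at $p_x$, and read off $\Tr_{x/E}([u])$ as $-\partial_\infty$ of the result; the answer is governed by the classical norm $N_{F/E}(u)$ together with $\eeta$-linear terms depending only on $F/E$. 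As each of these quantities is intrinsic to the extension, the two transfers agree on the generators and the proposition follows.

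The main obstacle I anticipate is precisely this last identification, and in particular the bookkeeping of the twist $\LLL_{F/E}=\LLL_{p_x}$ and of the $\eeta$-correction terms. The values above superficially depend on the generator --- through $N_{F/E}(x)$ when $p=2$, and through the analogous datum in the purely inseparable case --- and the crux is to show that every such dependence is merely the effect of trivializing the canonical line $\LLL_{F/k}$, so that it disappears once both transfers are regarded as the canonical maps between the fixed twisted groups $\kMW_*(F,\LLL_{F/k})$ and $\kMW_*(E,\LLL_{E/k})$.
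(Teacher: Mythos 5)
Your reduction step is exactly the paper's: Corollary \ref{CorBassTateMorel} provides the module generators, and the projection formula shows that two transfers agreeing on those generators agree on all of $\kMW_*(F,\LLL_{F/k})$. But the paper then finishes in one line by \emph{citing} the known fact that the transfer of a unit symbol $[\beta]$, $\beta\in F^\times$, is independent of the chosen generator (it refers to Fasel's work for this); it does not recompute it. Your proposal replaces that citation with a hands-on computation, and this is where it has a genuine gap. First, in positive characteristic your appeal to Lemma \ref{RiousLemma} proves too little: it shows that $\Tr_{x/E}(1)$ and $\Tr_{x'/E}(1)$ each differ from the rank $p$ by a nilpotent element of $\GW(E)$, hence that their difference is nilpotent --- but nilpotent elements of $\GW(E)$ are in general nonzero (e.g.\ $\ev{t}-1$ over $\mathbb{F}_p(t)$), so no equality follows. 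In the paper that lemma serves a different purpose, in Lemmas \ref{CorQuadraticBezout} and \ref{FundamentalLemma}, where a Bezout combination produces a unit $1+\alpha'$ that can be raised to a $p$-th power to kill the nilpotent part; nothing of that sort is available here. Second, for $p=2$ Lemma \ref{LamVII2.2} gives $\Tr_{x/E}(1)=\ev{1}+\ev{-N_{F/E}(x)}$, and the analogous untwisted formulas for the symbols $[u]$ also involve the norm of the generator: the \emph{untwisted} Bass--Tate transfers genuinely depend on $x$, so a direct comparison of these values cannot succeed.

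The entire content of the proposition is therefore concentrated in the statement that the twist by $\LLL_{F/k}$ absorbs exactly this dependence. You identify this correctly --- your last paragraph calls it ``the crux'' --- but you offer no argument for it: the claim that every dependence on $x$ is ``merely the effect of trivializing the canonical line'' is stated as an expectation, not proved, and it is precisely the assertion your reduction was designed to isolate. As written, the proposal reproduces the paper's easy half (generators plus projection formula) and leaves its hard half open. To close it you would either have to carry out the twisted computation of the transfer on $\GW$ and on unit symbols (in effect redoing the Morel--Fasel result the paper cites), or invoke that result as known, which is what the paper does.
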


\begin{proof}
According to Lemma \ref{CorBassTateMorel}, the group $\kMW_*(F,\LLL_{F/k})$ is generated by products of the form $\res_{F/E}(\alpha)\cdot [\beta]$ with $\alpha \in \kMW_*(E,\LLL_{E/k})$ and $\beta\in  F^\times$. The projection formula yields
\begin{center}
$\Tr_{x/E}(\res_{F/E}(\alpha)\cdot [\beta])=
\alpha\cdot \Tr_{F/E}([\beta])$
\end{center}
which does not depend on a $x$ (see \cite[§1]{Fasel18bis}).
\end{proof}
\begin{Par} We may now use the notation $\Tr_{F/E}:\kMW_*(F,\LLL_{F/k})\to \kMW_*(E,\LLL_{E/k})$ for extensions of prime degree $p$.
\end{Par}

\begin{Pro} \label{Mor12Remark5.20}
Let $F$ be a field complete with respect to a discrete valuation $v$, and $F'/F$ a normal extension of degree $p$. Denote by $v'$ the unique extension of $v$ to $F'$. Then the diagram
\begin{center}
$\xymatrix{
\kMW_*(F',\LLL_{F'/k})
\ar[r]^-{\partial_{v'}}
\ar[d]_{\Tr_{F'/F}}
&
\kMW_{*-1}(\kappa(v'),\LLL_{\kappa(v')})
\ar[d]^{\Tr_{\kappa(v')/\kappa(v)}}
\\
\kMW_*(F)
\ar[r]_-{\partial_v}
&
\kMW_{*-1}(\kappa(v))
}$
\end{center}
is commutative.
\end{Pro}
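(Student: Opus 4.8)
The plan is to verify the square on an explicit generating set, exploiting the projection formula for $\Tr_{F'/F}$ and the twisted Leibniz rule for the two residue maps, and then to organise the comparison according to the ramification of $v'$ over $v$. First I would invoke the Bass--Tate--Morel description (Proposition~\ref{BassTateMorelLemma}): writing $F'=F(x)$, the group $\kMW_*(F')$ is generated, as a left $\kMW_*(F)$-module, by products of symbols $[\beta_1,\dots,\beta_n]$ with $\beta_i\in (F')^\times$, together with their $\eeta$-multiples. Since the two composites in the square are additive, it suffices to evaluate them on a single element $z=\res_{F'/F}(\alpha)\cdot \sigma$ with $\alpha\in\kMW_*(F)$ and $\sigma$ such a symbol. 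On the left, the projection formula (as in the proof of Proposition~\ref{Prop7.3.8}) gives $\Tr_{F'/F}(z)=\alpha\cdot\Tr_{F'/F}(\sigma)$, after which I apply the Leibniz rule to $\partial_v$; on the right I apply the Leibniz rule to $\partial_{v'}(z)$ directly and then transfer. Matching the two outputs separates into a contribution from the factor $\res_{F'/F}(\alpha)$, controlled by Lemma~\ref{R3a}, and a contribution from the symbol $\sigma$, which is where the ramification analysis enters.

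The $\sigma$-contribution is handled by the two possible ramification types. Since $[F':F]=p$ is prime and $v'$ is the unique extension of $v$, the relation $ef=p$ forces either $e=1$ (the unramified case, $\kappa(v')/\kappa(v)$ of degree $p$) or $e=p$ (the totally ramified case, $\kappa(v')=\kappa(v)$). In the unramified case I would fix a uniformizer $\pi$ of $F$, which remains a uniformizer of $F'$, and write each entry $\beta_i=\pi^{m_i}w_i$ with $w_i$ a $v'$-unit. Since $\partial_{v'}$ annihilates symbols in $v'$-units and sends $[\pi]$ to the residue symbol, both residues reduce to symbols over $\kappa(v')$, and the identity follows from the projection formula for the degree-$p$ extension $\kappa(v')/\kappa(v)$ together with Lemma~\ref{R3a} specialised to $e=1$ (where the factor $e_\epsilon=1_\epsilon$ is trivial).

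In the totally ramified case $\kappa(v')=\kappa(v)$, so the right-hand vertical map $\Tr_{\kappa(v')/\kappa(v)}$ is the identity and the assertion reduces to $\partial_v\circ\Tr_{F'/F}=\partial_{v'}$. Here I would choose a uniformizer $\pi'$ of $F'$, note that $v'(\res_{F'/F}(\pi))=p$ for a uniformizer $\pi$ of $F$, and feed the generators through Lemma~\ref{R3a}, whose ramification index $e=p$ now produces the quadratic factor $p_\epsilon$. This factor must be absorbed by the transfer of the unit part, using the normalisation of $\Tr_{F'/F}(1)$ (Lemma~\ref{LamVII2.2} in characteristic zero, Lemma~\ref{RiousLemma} in positive characteristic) together with the multiplicativity $(mn)_\epsilon=m_\epsilon n_\epsilon$; after this the $\epsilon$-multiplicities cancel and the two sides agree.

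The main obstacle is the careful bookkeeping of the twists $\LLL_v,\LLL_{v'}$ and of the quadratic multiplicities introduced by the Milnor--Witt Leibniz rule: unlike the Milnor case, the residue is defined only after a choice of uniformizer and carries a $\langle -1\rangle$-correction, so the specialisation terms produced on the two sides must be reconciled with the correct $\epsilon$-signs. The genuinely delicate point lies inside the ramified case when $p$ equals the residue characteristic, so that $F'/F$ may be wildly ramified or, under the normality hypothesis, purely inseparable; there one cannot compute with a tame uniformizer, and the argument must instead use the normality of $F'/F$ and the projection formula to express $\partial_v\circ\Tr_{F'/F}$ intrinsically, reducing to the compatibilities already established rather than to a direct symbol computation.
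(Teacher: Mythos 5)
The paper offers no proof of this proposition at all: it is quoted verbatim from Morel (\cite[Remark 5.20]{Mor12}) and used as an input to Theorem \ref{KatoMorel}. Your proposal must therefore stand on its own as a proof of Morel's result, and it does not. The decisive gap is the one you yourself flag in your last paragraph: when $p$ equals the residue characteristic, so that $F'/F$ may be wildly ramified or purely inseparable, you have no argument --- ``express $\partial_v\circ\Tr_{F'/F}$ intrinsically, reducing to the compatibilities already established'' names no lemma of this paper or of \cite{Mor12} that could accomplish this, and no such compatibility is in fact available at this point. This case cannot be deferred: in Kato's proof of the analogous statement for Milnor K-theory (the model this paper follows, cf. \cite{GS17}), the wild case is the heart of the entire computation, and the same is true for $\kMW$ in Morel's book. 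A proof covering only the tame cases establishes essentially nothing new.

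Moreover, even the tame cases are circular as written. After the Bass--Tate--Morel reduction and the projection formula, you are left needing $\partial_v(\Tr_{F'/F}(\sigma))$ for symbols $\sigma=\eeta^m[\beta_1,\dots,\beta_n]$ whose entries lie in $(F')^\times$ but not in $F^\times$, and nothing you cite computes this: Lemma \ref{R3a} concerns restriction maps on both verticals, so it controls only the coefficient $\res_{F'/F}(\alpha)$, while the projection formula is useless on a symbol none of whose entries comes from $F$. In the unramified case, saying that ``both residues reduce to symbols over $\kappa(v')$'' and then invoking the projection formula for $\kappa(v')/\kappa(v)$ presupposes exactly the statement being proved, namely that the residue of a transferred unit symbol is the transfer of its residue. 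Breaking this circle requires an actual determination of $\Tr_{F'/F}$ for complete fields (e.g.\ via the split exact sequence attached to a choice of uniformizer and an explicit computation on unit symbols), which is what Morel, and Kato for $K^M$, actually carry out. Two further errors: your dichotomy ``$e=1$ or $e=p$'' assumes $[F':F]=ef$, which can fail by defect precisely in the wild case, since the residue fields here are only finitely generated over $k$ and hence possibly imperfect, and normal degree-$p$ defect extensions of complete discretely valued fields do exist; and in the totally ramified case the claimed cancellation of the factor $p_\epsilon$ from Lemma \ref{R3a} against $\Tr_{F'/F}(1)$ is asserted rather than proved --- note that $\Tr_{F'/F}(1)=p_\epsilon$ is only supplied by Lemma \ref{LamVII2.2} for $p$ odd in characteristic zero, not in the generality you need.
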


\begin{proof}
See \cite[Remark 5.20]{Mor12}.
\end{proof}

\begin{Cor} \label{Mor12Remark5.20Cor}
Let $F/E$ be a normal extension of degree $p$ and let $x\in (\AAA^1_E)^{(1)}$. Then the diagram
\begin{center}
$\xymatrix{
\kMW_*(F(t),\LLL_{F(t)/k})
\ar[r]^-{\oplus \partial_y}
\ar[d]_{\Tr_{F(t)/E(t)}}
& 
\bigoplus_{y\mapsto x} \kMW_{*-1}(\kappa(y),\LLL_{\kappa(y)/k})
\ar[d]^{\sum \Tr_{\kappa(y)/\kappa(x)}}
\\
\kMW_*(E(t),\LLL_{E(t)/k})
\ar[r]_{\partial_x}
&
\kMW_{*-1}(\kappa(x),\LLL_{\kappa(x)/k})
}$
\end{center}
is commutative, where $y\to x$ denotes the set of elements $y\in (\AAA^1_F)^{(1)}$ mapping to $x$ through the canonical morphism.

\end{Cor}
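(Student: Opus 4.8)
The plan is to deduce this global identity from the complete local statement of Proposition~\ref{Mor12Remark5.20} by completing at the point $x$. Write $v_x$ for the discrete valuation on $E(t)$ attached to $x$ and let $K=\widehat{E(t)}_{v_x}$ be its completion, a complete field with residue field $\kappa(x)$. For each closed point $y\in(\AAA^1_F)^{(1)}$ above $x$, let $v_y$ be the corresponding valuation on $F(t)$ and $K_y=\widehat{F(t)}_{v_y}$ its completion, with residue field $\kappa(y)$. The points $y\mapsto x$ are exactly the maximal ideals of the finite $K$-algebra $F(t)\otimes_{E(t)}K$, and since $[F(t):E(t)]=p$ is prime and $F(t)/E(t)$ is normal, each factor $K_y/K$ is a normal extension of degree $1$ or $p$.

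First I would use that residues are compatible with completion: because the residue field does not change, one has $\partial_x=\partial_{\hat x}\circ\res_{K/E(t)}$ and $\partial_y=\partial_{\hat y}\circ\res_{K_y/F(t)}$, where $\partial_{\hat x}$ and $\partial_{\hat y}$ are the residues of the complete fields $K$ and $K_y$. Secondly, I would establish the base-change formula for the transfer,
$$\res_{K/E(t)}\circ\Tr_{F(t)/E(t)}=\sum_{y\mapsto x}\Tr_{K_y/K}\circ\res_{K_y/F(t)},$$
which is the completion analogue of the transfer--restriction compatibility of Lemma~\ref{R1c_faible} and Theorem~\ref{R1c_fort_KMW} applied along the base change $K/E(t)$; the normal prime-degree hypothesis guarantees that the geometric multiplicities of $F(t)\otimes_{E(t)}K$ are trivial, so that no $e_{y,\epsilon}$ correction appears.

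Granting these, the corollary follows by a direct chase. For $\beta\in\kMW_*(F(t),\LLL_{F(t)/k})$ one computes
$$\partial_x\big(\Tr_{F(t)/E(t)}(\beta)\big)=\partial_{\hat x}\Big(\sum_{y}\Tr_{K_y/K}\big(\res_{K_y/F(t)}(\beta)\big)\Big)=\sum_{y}\partial_{\hat x}\big(\Tr_{K_y/K}(\res_{K_y/F(t)}(\beta))\big),$$
and Proposition~\ref{Mor12Remark5.20} applied to each normal extension $K_y/K$ rewrites $\partial_{\hat x}\circ\Tr_{K_y/K}$ as $\Tr_{\kappa(y)/\kappa(x)}\circ\partial_{\hat y}$. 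Reassembling with $\partial_y=\partial_{\hat y}\circ\res_{K_y/F(t)}$ produces $\sum_{y}\Tr_{\kappa(y)/\kappa(x)}(\partial_y(\beta))$, the desired right-hand side; the factors $K_y/K$ of degree $1$ contribute trivially.

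The step I expect to be the main obstacle is the base-change formula for the transfer along the infinite extension $K/E(t)$: Theorem~\ref{R1c_fort_KMW} is stated only for finitely generated field extensions, so one must either realise $K$ as a filtered colimit along which the Bass--Tate transfer is continuous, or argue directly from the definition of $\Tr$ through $\partial_\infty$ that it specializes as claimed. The genuinely delicate part is checking that all geometric multiplicities in $F(t)\otimes_{E(t)}K$ equal $1$ --- equivalently, that no inseparable defect occurs --- since this is precisely what allows the clean statement of Proposition~\ref{Mor12Remark5.20} to globalize without multiplicity corrections.
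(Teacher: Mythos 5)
Your argument is essentially the paper's own proof: complete at $x$, apply Theorem \ref{R1c_fort_KMW} along the base change $\hat{E}_x/E(t)$ to get the left-hand square, apply Proposition \ref{Mor12Remark5.20} to each complete extension $\hat{F}_y/\hat{E}_x$ for the right-hand square, and reassemble using compatibility of residues with completion. The structure and both key ingredients match exactly.

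One correction on the point you flag as delicate: the triviality of the multiplicities in $F(t)\otimes_{E(t)}\hat{E}_x$ does \emph{not} come from $F/E$ being normal of prime degree --- for instance, a purely inseparable degree-$p$ extension base-changed along a suitable inseparable extension acquires a local ring of length $p$, and normality is irrelevant. The correct reason is that $E[t]$ is excellent, so the completion $\hat{E}_x$ is a \emph{separable} (geometrically regular) extension of $E(t)$; consequently $F(t)\otimes_{E(t)}\hat{E}_x$ is reduced, hence isomorphic to $\prod_{y\mapsto x}\hat{F}_y$, for \emph{every} finite extension $F(t)/E(t)$, including the purely inseparable ones that a normal degree-$p$ extension may well be in characteristic $p$. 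This also answers your worry about finite generation: Theorem \ref{R1c_fort_KMW} is stated and proved (via Lemma \ref{R3a}, which holds for arbitrary fields) for an arbitrary field extension $L/E$, so it applies directly to $L=\hat{E}_x$ without any colimit argument, exactly as the paper uses it.
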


\begin{proof}
Denote by $\hat{E}_x$ (resp. $\hat{F}_y$) the completions of $E(t)$ (resp. $F(t)$) with respect to the valuations defined by $x$ (resp. $y$). Consider the following diagram
\begin{center}
$\xymatrix{
\kMW_*(F(t),\LLL_{F(t)/k}) 
\ar[d]_{\Tr_{F(t)/E(t)}}
 \ar[r]
&
\bigoplus_{y\to x} \kMW_{*}(\hat{F}_y,\LLL_{\hat{F}_y/k}) 
\ar[r]^-{\oplus \partial_y} 
\ar[d]^{\sum \Tr_{\hat{F}_y/\hat{F}_x}}
&
\bigoplus_{y\to x} \kMW_{*-1}(\kappa(y),\LLL_{\kappa(y)/k})
\ar[d]^{\sum \Tr_{\kappa(y)/\kappa(x)}}
\\
\kMW_*(E(t),\LLL_{E(t)/k}) \ar[r]
&
\kMW_*(\hat{E}_x,\LLL_{\hat{E}_x/k}) 
 \ar[r]^-{\partial_x}
&
\kMW_{*-1}(\kappa(x),\LLL_{\kappa(x)/k}).
}$
\end{center}
The left-hand square is commutative according to Theorem \ref{R1c_fort_KMW}. The right-hand square commute according to Proposition \ref{Mor12Remark5.20}. Hence the corollary.
\end{proof}

\begin{Lem} \label{Lem7.3.12}
Let $L/E$ be a normal extension of degree $p$, and let $E(a)/E$ be a monogenous finite extension. Assume that $L$ and $E(a)$ are both subfields of some algebraic extension of $E$, and denote by $L(a)$ their composite. Then the following diagram
\begin{center}
$\xymatrix{
\kMW_*(L(a),\LLL_{L(a)/k})
\ar[d]_-{\Tr_{L(a)/E(a)}}
\ar[r]^{\Tr_{a/L}}
&
\kMW_*(L,\LLL_{L/k})
\ar[d]^{\Tr_{L/E}}
\\
\kMW_*(E(a),\LLL_{E(a)/k})
\ar[r]_-{\Tr_{a/E}}
&
\kMW_*(E,\LLL_{E/k})
}$
\end{center}
is commutative. 
\end{Lem}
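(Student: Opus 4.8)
The plan is to compute both composites $\Tr_{L/E}\circ\Tr_{a/L}$ and $\Tr_{a/E}\circ\Tr_{L(a)/E(a)}$ directly from the residue definition of the Bass--Tate transfers (\ref{BassTateTransfersDefinitionBis}) and to identify them with one and the same element $-\partial_\infty(\gamma')$ of $\kMW_*(E)$. The only inputs are the homotopy exact sequence (Theorem \ref{ThmHomInvarianceHIbis}), the compatibility of normal degree-$p$ transfers with residues (Corollary \ref{Mor12Remark5.20Cor}) together with its analogue at the place $\infty$, and the independence of the transfer on the chosen lift. Throughout, recall that $E(a)$ is again $p$-primary by Lemma \ref{Lem7.3.7}, so that all the normal degree-$p$ transfers below are well defined.

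First I would fix the geometry of the points. Let $P\in E[t]$ be the minimal polynomial of $a$ over $E$; it defines a closed point $x\in(\AAA^1_E)^{(1)}$ with $\kappa(x)=E(a)$. Let $Q\in L[t]$ be the minimal polynomial of $a$ over $L$; it defines a closed point $y\in(\AAA^1_L)^{(1)}$ with $\kappa(y)=L(a)$, and $Q\mid P$ shows that $y$ lies above $x$. Since $L/E$ is normal of degree $p$, the residue extension $\kappa(y)/\kappa(x)=L(a)/E(a)$ is again normal, of degree $1$ or $p$, and the associated transfer $\Tr_{\kappa(y)/\kappa(x)}$ is exactly $\Tr_{L(a)/E(a)}$. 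There may well be several points of $\AAA^1_L$ over $x$ (precisely when $P$ splits over $L$, i.e. when $L\subset E(a)$), but the computation below is uniform in this respect because the chosen lift will be supported at the single point $y$.

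Next comes the core computation. Given $\beta\in\kMW_*(L(a))$, I would use Theorem \ref{ThmHomInvarianceHIbis} over $L$ to pick $\gamma\in\kMW_{*+1}(L(t))$ with $\partial_y(\gamma)=\beta$ and $\partial_{y'}(\gamma)=0$ for every other finite $y'$; by definition $\Tr_{a/L}(\beta)=-\partial_\infty(\gamma)$. Set $\gamma'=\Tr_{L(t)/E(t)}(\gamma)\in\kMW_{*+1}(E(t))$, the transfer along the normal degree-$p$ extension $L(t)/E(t)$. Applying Corollary \ref{Mor12Remark5.20Cor} at each finite point of $\AAA^1_E$, and using that only $y$ contributes, I obtain $\partial_x(\gamma')=\Tr_{\kappa(y)/\kappa(x)}(\beta)=\Tr_{L(a)/E(a)}(\beta)$ together with $\partial_{x'}(\gamma')=0$ for every finite $x'\neq x$. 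Hence $\gamma'$ is a legitimate lift of $\Tr_{L(a)/E(a)}(\beta)$, so \ref{BassTateTransfersDefinitionBis} and the independence of the lift give $\Tr_{a/E}(\Tr_{L(a)/E(a)}(\beta))=-\partial_\infty(\gamma')$, which is the left-then-bottom composite. On the other hand, the analogue of Corollary \ref{Mor12Remark5.20Cor} at the unique point of $\PP^1_L$ above $\infty_E$ (whose residue extension is $L/E$) yields $\Tr_{L/E}(\partial_\infty(\gamma))=\partial_\infty(\Tr_{L(t)/E(t)}(\gamma))=\partial_\infty(\gamma')$, so the top-then-right composite equals $\Tr_{L/E}(\Tr_{a/L}(\beta))=-\partial_\infty(\gamma')$ as well. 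Comparing the two, the square commutes.

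The step I expect to be the main obstacle is the compatibility at $\infty$, namely $\Tr_{L/E}\circ\partial_\infty=\partial_\infty\circ\Tr_{L(t)/E(t)}$: this is not literally Corollary \ref{Mor12Remark5.20Cor}, which is stated for finite points, but it follows from the same proof, passing to the completions $E((1/t))\subset L((1/t))$, invoking Proposition \ref{Mor12Remark5.20} (the valuation extends uniquely and unramifiedly, with residue extension $L/E$) and Theorem \ref{R1c_fort_KMW} for the left-hand square. A secondary point is the coherent bookkeeping of the twisting line bundles $\LLL_{-/k}$, which are carried along exactly as in Corollary \ref{Mor12Remark5.20Cor}, so that no new difficulty arises there. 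Finally, the degenerate situations ($a\in E$, or $L\subset E(a)$ with $L(a)=E(a)$) are covered verbatim by the same computation, the former case making both composites equal to $\Tr_{L/E}$.
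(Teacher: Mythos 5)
Your proof is correct and follows essentially the same route as the paper: lift $\beta$ to $\gamma$ over $L(t)$ supported at the single point $y$, transfer along the normal degree-$p$ extension $L(t)/E(t)$, compute the finite residues of $\Tr_{L(t)/E(t)}(\gamma)$ via Corollary \ref{Mor12Remark5.20Cor}, and compare residues at $\infty$. In fact you are slightly more careful than the paper at the place $\infty$ (where the paper simply invokes Corollary \ref{Mor12Remark5.20Cor} again, even though it is stated for finite points); your justification via completions, Proposition \ref{Mor12Remark5.20} and Theorem \ref{R1c_fort_KMW} is exactly how that corollary is proved, so the gap you flag is filled correctly.
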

\begin{proof}
Let $x$ (resp $y_0$) be the closed point of $\AAA^1_E$ (resp. $\AAA^1_L$) defined by the minimal polynomial of $a$ over $E$ (resp. $L$). Given $\alpha \in \kMW_*(L(a),\LLL_{L(a)/k})$, we have $\Tr_{a/L}(\alpha)= -\partial_{\infty}(\beta)$ for some $\beta\in \kMW_{*+1}(L(t),\LLL_{L(t)/k})$ satisfying $\partial_{y_0}(\beta)=\alpha$ and $\partial_{\beta}=0$ for $y\neq y_0$. By Corollary \ref{Mor12Remark5.20Cor}
\begin{center}
$\partial_x(\Tr_{L(t)/E(t)}(\beta))=\sum_{y\mapsto x} \Tr_{\kappa(y)/\kappa(x)}(\partial_y(\beta))=\Tr_{\kappa(y_0)/\kappa(x)}(\alpha)$,
\end{center}
and, similarly, $\partial_{x'}(\Tr_{L(t)/E(t)}(\beta))=0$ for $x\neq x'$. Hence by definition of the transfer map $\Tr_{a/E}$ we have

\begin{center}

$\Tr_{a/E}(\Tr_{L(a)/E(a)}(\alpha))=-\partial_{\infty}(\Tr_{L(t)/E(t)}(\beta))$.
\end{center}
Moreover, since the only point of $\PP^1_L$ above $\infty$ is $\infty$, another application of Corollary \ref{Mor12Remark5.20Cor} gives
\begin{center}
$\partial_\infty(\Tr_{L(t)/E(t)}(\beta))=\Tr_{L/E}(\partial_{\infty}(\beta)).$
\end{center}
Hence the result.
\begin{center}
$ \Tr_{a/E}(\Tr_{L(a)/E(a)}(\alpha))=-\Tr_{L/E}(\partial_{\infty}(\beta)=\Tr_{L/E}(\Tr_{a/L}(\alpha))$.
\end{center}
\end{proof}

\begin{proof}[Proof of Theorem \ref{KatoMorel}]
We keep the previous notations. We already know that it suffices to treat the case when $E$ has no nontrivial extension of degree prime to $p$. Let $p^m$ be the degree of the extension $F/E$. We prove the result by induction on $m$. The case $m=1$ follows from Proposition \ref{Prop7.3.8}. Consider two decompositions 
\begin{center}
$E\subset E(x_1)\subset E(x_1,x_2)\subset \dots \subset E(x_1,\dots , x_r)=F.$

\end{center}
and
\begin{center}
$E\subset E(y_1)\subset E(y_1,y_2)\subset \dots \subset E(y_1,\dots , y_s)=F.$

\end{center}
of $F$. By Lemma \ref{Lem7.3.7}, the extension $E(x_1)/E$ contains a normal subfield $E(x_1')$ of degree $p$ over $E$. Applying Lemma \ref{Lem7.3.12} with $a=x_1$ and $L=E(x'_1)$ yields $\Tr_{x_1/E}=\Tr_{x'_1/E}\circ \Tr_{x_1/E(x'_1)}$. Hence, without loss of generality, we may assume that $x_1=x_1'$ and, similarly, $[E(y_1):E]=p$. Write $F_0$ for the composite of the fields $E(x_1)$ and $E(y_1)$ in $F$ and write $F=F_0(z_1,\dots, z_t)$ with $z_i\in F$. The fields $E(x_1)$ and $E(y_1)$ have no nontrivial prime to $p$ extension, thus we may conclude by the induction hypothesis that the triangles
\begin{center}
$\xymatrixcolsep{6pc}\xymatrix{
\kMW_*(F,\LLL_{F/k})
\ar[r]^-{\Tr_{x_2,\dots, x_r/E(x_1)}}
\ar[d]_{\Tr_{z_1,\dots , z_t/F_0}}
&
\kMW_*(E(x_1),\LLL_{E(x_1)/k})
\\
\kMW_*(F_0,\LLL_{F_0/k})
\ar[ru]_-{\,\,\,\,\, \Tr_{F_0/E(x_1)}}
}$
\end{center}
and
\begin{center}
$\xymatrixcolsep{6pc}\xymatrix{
\kMW_*(F,\LLL_{F/k})
\ar[r]^-{\Tr_{y_2,\dots, y_s/E(y_1)}}
\ar[d]_{\Tr_{z_1,\dots , z_t/F_0}}
&
\kMW_*(E(y_1),\LLL_{E(y_1)/k})
\\
\kMW_*(F_0,\LLL_{F_0/k})
\ar[ru]_-{\,\,\,\,\, \Tr_{F_0/E(y_1)}}
}$
\end{center}
are commutative.

Moreover,  Lemma \ref{Lem7.3.12} for $a=x_1$ and $L=E(y_1)$ implies that the following diagram
\begin{center}
$\xymatrix{
\kMW_*(F_0,\LLL_{F_0/k}) \ar[r]^-{\Tr_{F_0/E(x_1)}}
\ar[d]_-{\Tr_{F_0/E(y_1)}}
&
\kMW_*(E(x_1),\LLL_{E(x_1)/k}) \ar[d]^-{\Tr_{x_1/E}} 
\\
\kMW_*(E(y_1),\LLL_{E(y_1)/k}) \ar[r]_-{\Tr_{y_1/E}}
&
\kMW_*(E,\LLL_{E/k})
}$
\end{center}
is commutative. Putting everything together, we conclude that $\Tr_{x_1,\dots, x_r/E}=\Tr_{y_1,\dots, y_s/E}$.
\end{proof}

  \bibliographystyle{alpha}
  \bibliography{exemple_biblio}


\end{document}